\begin{document}

\newcommand{\LL}{\mathbb{L}}
\newcommand{\RR}{\mathbb{R}^{2}}
\newcommand{\hh}[1]{\mathcal{H}_{\delta}^{#1}}
\newcommand{\HH}[1]{\mathcal{H}^{#1}}
\newcommand{\h}{\mathfrak{h}}
\newcommand{\g}{\mathfrak{g}}
\newcommand{\e}{\varepsilon}
\newcommand{\N}{\mathbb{N}}
\newcommand{\R}{\mathbb{R}}
\newcommand{\s}{\mathbb{S}}
\newcommand{\sub}{\subseteq}
\newcommand{\diam}{\text{diam}}
\newcommand{\Z}{\mathbb{Z}}
\newcommand{\Q}{\mathbb{Q}}
\newcommand{\f}{\mathfrak{f}}
\renewcommand{\H}{\mathbb{H}}
\newcommand{\n}{\mathfrak{n}}
\newcommand{\m}{\mathfrak{m}}

\date{\today}

\newtheorem{theorem}{Theorem}[section]
\newtheorem{lemma}[theorem]{Lemma}
\newtheorem{corollary}[theorem]{Corollary}
\newtheorem{proposition}[theorem]{Proposition}
\newtheorem{claim}[theorem]{Claim}

\theoremstyle{definition}
\newtheorem{definition}[theorem]{Definition}
\newtheorem{example}[theorem]{Example}
\newtheorem{remark}[theorem]{Remark}

\title{Small Furstenberg sets}

\author{Ursula Molter and Ezequiel Rela}

\address{Departamento de
Matem\'atica \\ Facultad de Ciencias Exactas y Naturales\\
Universidad de Buenos Aires\\ Ciudad Universitaria, Pabell\'on
I\\ 1428 Capital Federal\\ ARGENTINA\\ and IMAS - CONICET, Argentina}
 \email[Ursula Molter]{umolter@dm.uba.ar}
\email[Ezequiel Rela]{erela@dm.uba.ar} 
\keywords{Furstenberg sets, Hausdorff dimension, dimension function, Jarn\'ik's theorems.} \subjclass{Primary 28A78, 28A80}

\thanks{This research  is partially supported by
Grants: PICT2006-00177, UBACyT X149 and CONICET PIP368}

\begin{abstract}
For $\alpha$ in $(0,1]$, a subset $E$ of $\RR$ is called
\textit{Furstenberg set} of type $\alpha$ or $F_\alpha$-set if
for each direction $e$ in the unit circle there is a line segment
$\ell_e$ in the direction of $e$ such that the Hausdorff
dimension of the set $E\cap\ell_e$ is greater than or equal to
$\alpha$. In this paper we use generalized Hausdorff measures 
to give estimates on the size of these sets.
Our main result is to obtain a sharp dimension estimate for a whole class of {\em zero}-dimensional Furstenberg type sets. Namely, for
$\mathfrak{h}_\gamma(x)=\log^{-\gamma}(\frac{1}{x})$, $\gamma>0$, we
construct a set $E_\gamma\in F_{\mathfrak{h}_\gamma}$ of Hausdorff
dimension not greater than $\frac{1}{2}$.  Since in a previous work we showed that    $\frac{1}{2}$ is a lower bound  for the
Hausdorff dimension of any $E\in F_{\mathfrak{h}_\gamma}$, with
the present construction, the value $\frac{1}{2}$ is sharp for
the whole class of Furstenberg sets associated to the zero
dimensional functions $\mathfrak{h}_\gamma$.
\end{abstract}

\maketitle

\section{Introduction}\label{sec:intro}
We study dimension properties of sets of Furstenberg type. In
particular we are interested in being able to construct very small
Furstenberg sets in a given class. We begin with
the definition of classical Furstenberg sets. 

\begin{definition}
For $\alpha$ in $(0,1]$, a subset $E$ of $\RR$ is called
\textit{Furstenberg set} of type $\alpha$ or $F_\alpha$-set if
for each direction $e$ in the unit circle there is a line segment
$\ell_e$ in the direction of $e$ such that the Hausdorff
dimension ($\dim_H$) of the set $E\cap\ell_e$ is greater than or equal to
$\alpha$. We will also say that such set $E$ belongs to the class
$F_\alpha$.
\end{definition}

It is known (\cite{wol99b}, \cite{kt01}) that $\dim_H(E)\ge\max\{2\alpha,\alpha+\frac{1}{2}\}$
for any $F_\alpha$-set $E\subseteq \RR$ and there are examples of
$F_\alpha$-sets $E$ with $\dim_H(E)\le
\frac{1}{2}+\frac{3}{2}\alpha$. Hence, if we denote by

\begin{equation*}
  \Phi(F_\alpha)=\inf \{\dim_H(E): E\in F_\alpha\},
\end{equation*}
then
\begin{equation}
\label{eq:dim}\max\left\{\alpha+\frac{1}{2} ; 2\alpha\right\}\le
\Phi(F_\alpha)\le\frac{1}{2}+\frac{3}{2}\alpha,\qquad
\alpha\in(0,1].
\end{equation}
In \cite{mr10} the left hand side of this inequality has been extended to the case of more general
dimension functions, i.e., functions that are not necessarily power functions
(\cite{Hau18}).

\subsection{Dimension Functions and Hausdorff measures}

\begin{definition}\label{def:dimfunc}
A function $h$ will be called \textit{dimension function} if it
belongs to the  class $\mathbb{H}$ which is defined as follows:
\begin{equation*}
 \mathbb{H}:=\{h:[0,\infty)\to[0:\infty),
 \text{non-decreasing, right continuous, }
h(0)=0 \}.
\end{equation*}
The important subclass of those $h\in\mathbb{H}$ that satisfy a doubling condition will be denoted by $\mathbb{H}_d$:
\begin{equation}
	\mathbb{H}_d:=\left\{h\in\mathbb{H}: h(2x)\le C h(x) \text{ for some }C>0\right\}.\nonumber
\end{equation} 
Let $g,h$ be two dimension functions. We will say that $g$ is
dimensionally smaller than $h$ and write $g\prec h $ if and only
if
    \begin{equation*}
    \lim_{x\to 0^+}\dfrac{h(x)}{g(x)}=0.
    \end{equation*}

A function $h\in\H$ will be called a ``zero dimensional dimension
function'' if $h\prec x^\alpha$ for any $\alpha>0$. We denote by
$\mathbb{H}_0$ the subclass of those functions.

As usual, the $h$-dimensional (outer) Hausdorff  measure $\HH{h}$
will be defined as follows. For a set $E\subseteq\R^n$ and
$\delta>0$, write
\begin{equation*}
    \hh{h}(E)=\inf\left\{\sum_i h(\diam(E_i)):E\subset\bigcup_i^\infty E_i,  \diam(E_i)<\delta \right\}.
\end{equation*}
The $h$-dimensional Hausdorff measure $\HH{h}$ of $E$ is defined
by
\begin{equation*}
    \HH{h}(E)=\sup_{\delta>0 }\hh{h}(E). \label{eq:h-meas}
\end{equation*}
\end{definition}

Recall that the Hausdorff dimension of a set $E\sub\R^n$ is the
unique real number $s$ characterized by the following properties:
\begin{itemize}
 \item $\HH{r}(E)=+\infty$ for all $r<s$.
 \item $\HH{t}(E)=0$ for all $s<t$.
\end{itemize}
Therefore, to prove that a given set   $E$ has dimension $s$, it is
enough to  check the preceding two properties, independently if
$\HH{s}(E)$ is zero, finite and positive, or infinite. However,
in general it is not true that, given a set $E$, there is a
function $h\in\H$, such that if $g\succ h$ then
$\HH{g}(E)=0$, and if $g\prec h$, then $\HH{g}(E)=+\infty$.
The difficulties arise from two results due to Besicovitch  (see
\cite{rog98} and references therein). The first says that if a set
$E$ has null $\HH{h}$-measure for some $h\in\H$, then there
exists a function  $g$ which is dimensionally smaller than $h$ and for which still $\HH{g}(E)=0$.
Symmetrically, the second says that if a \emph{compact} set $E$
has non-$\sigma$-finite $\HH{h}$ measure, then there exists a
function $g\succ h$ such that $E$ has also non-$\sigma$-finite
$\HH{g}$ measure. These two results imply that if a set $E$
satisfies that there exists a function  $h$ such that
$\HH{g}(E)>0$ for any $g\prec h$ and $\HH{g}(E)=0$ for any
$g\succ h$, then it must be the case that
$0<\HH{h}(E)$ and $E$ has $\sigma$-finite $\HH{h}$-measure. 

Now consider the set $\LL$ of Liouville numbers (see \cite{ek06}). It is shown in \cite{or06} that there are two proper nonempty subsets $\LL_0, \LL_\infty\sub\H$  of dimension functions, such that $\HH{h}(\LL)=0$ for all $h\in\LL_0$ and for all $h\in\LL_\infty$, the set $\LL$ has non $\sigma$-finite $\HH{h}$-measure. Therefore, this example shows that in the general setting of dimension functions, in order  to detect the precise size of a given set, one needs to estimate a dimensional gap starting from a conjectured dimension function, both from above and from below. Precisely, one wants to measure how fast the quotient  (or gap) $\frac{h}{g}$ between $g, h\in\H$ grows, whenever $g\prec h$ goes to zero.

\subsection{Generalized Furstenberg sets. Statement of the main result}

The analogous definition of Furstenberg sets in the setting of
dimension functions is the following.
\begin{definition}\label{def:furs}
Let $\h$ be a dimension function. A set $E\subseteq\RR$ is a
Furstenberg set of type $\h$, or an $F_\h$-set, if for each
direction $e\in\s$ there is a line segment $\ell_e$ in the
direction of $e$ such that  $\HH{\h}(\ell_e \cap E)>0$.
\end{definition}

In \cite{mr10} we proved that the appropriate dimension function
for an $F_\h$ set $E$ must be dimensionally not much smaller than $\h^2$
and $\h\sqrt{\cdot}$ (this is the generalized version of the left
hand side of \eqref{eq:dim}), the latter with some additional
conditions on $\h$. 
In particular, for the zero-dimensional Furstenberg-type sets $E$ belonging to $F_{\h_\gamma}$, where
$\h_\gamma\in\H_0$ (see Definition \ref{def:dimfunc}) is defined by
$\h_\gamma(x)=\frac{1}{\log^\gamma(\frac{1}{x})}$, we showed that $\dim_H(E)\ge\frac{1}{2}$.

In the present work we look at a refinement of the upper bound for the dimension of Furstenberg sets. Since we are looking for upper
bounds on a class of Furstenberg sets, the aim will be to explicitly construct  a very small set belonging to the given class.

We first consider the classical case of power functions,  $x^\alpha$, for $\alpha >0$. Recall that for this case, the known upper bound implies that, for any positive
$\alpha$, there is a set $E\in F_\alpha$ such that
$\HH{\frac{1+3\alpha}{2}+\e}(E)=0$ for any $\e>0$. By looking closer at  Wolff's arguments, it can be seen that in fact it is true that $\HH{g}(E)=0$ for any dimension function $g$ of the form
\begin{equation}\label{eq:supertrivial}
g(x)=x^\frac{1+3\alpha}{2}\log^{-\theta}\left(\frac{1}{x}\right),\qquad \theta>\frac{3(1+3\alpha)}{2}+1.
\end{equation}
Further, that argument can be modified (Theorem \ref{thm:logFalpha}) to sharpen on the logarithmic gap, and therefore improving \eqref{eq:supertrivial} by proving the same result for any $g$ of the form
\begin{equation}\label{eq:mediumtrivial}
g(x)=x^\frac{1+3\alpha}{2}\log^{-\theta}\left(\frac{1}{x}\right),\qquad \theta>\frac{1+3\alpha}{2}.
\end{equation}

However, this modification will not be sufficient for our main objective, which is to reach the zero dimensional case. More precisely, we will focus at the endpoint $\alpha=0$, and give a
complete answer about the \emph{exact dimension} of  a class of Furstenberg sets. We will prove in Theorem \ref{thm:sqrth3/2} that, for any given $\gamma>0$, there exists a set
$E_\gamma\sub\R^2$ such that 
\begin{equation}\label{eq:smallmain}
 E_\gamma\in F_{\h_\gamma} \text{ for }\h_\gamma(x)=\frac{1}{\log^\gamma(\frac{1}{x})} \text{ and } \dim_H(E_\gamma)\le\frac{1}{2}.
\end{equation}
This result, together with the results from \cite{mr10} mentioned above, shows that $\frac{1}{2}$ is sharp for the class $F_{\h_\gamma}$. In fact, for this family both  inequalities in \eqref{eq:dim} are in fact the equality $\Phi(F_{\h_\gamma})=\frac{1}{2}$.

In order to be able to obtain \eqref{eq:smallmain}, it is not enough to simply ``refine'' the construction of Wolff. He achieves the desired set by choosing a  specific set as the fiber in each direction. This set is known to have the correct dimension. To be able to reach the zero dimensional case, we need to handle the delicate issue of choosing an analogue zero dimensional set on each fiber. The main difficulty lies in being able to handle \emph{simultaneously} Wolff's construction and the proof of the fact that the fiber satisfies the stronger condition of having positive measure for the correct dimension function.

The paper is organized as follows.  In Section \ref{sec:furs}, for a given $\alpha > 0$, we carefully  develop the main construction of small Furstenberg ${\alpha}$ sets, and obtain dimension estimates for the class $F_\alpha$, $\alpha>0$. In Section \ref{sec:gfurs} we show that we can modify the argument of the previous section to include the zero-dimensional functions $\h_{\gamma}$ defined in \eqref{eq:smallmain}. The key ingredient is a lemma on Diophantine approximation regarding the size of zero dimensional fibers proved in  Section \ref{sec:fiber}. 
As usual, we will use the notation $A\lesssim B$ to indicate that
there is a constant $C>0$ such that $A\le C B$, where the
constant is independent of $A$ and $B$. By $A\sim B$ we mean that
both $A\lesssim B$ and $B\lesssim A$ hold.

\section{Upper Bounds for Furstenberg-type Sets}\label{sec:furs}
In this section we will concentrate on the right hand side of \eqref{eq:dim}.  This inequality has been proved by showing that there exists a set $E$
 in $F_{\alpha}$ such that  $\HH{s}(E)=0$
for any $s>\frac{1+3\alpha}{2}$.
However, by the result of Besicovitch cited in the Introduction, this does not necessarily imply that 
$\HH{h}(E)=0$ for any $h\succ x^\frac{1+3\alpha}{2}$.
Here we refine the arguments of Wolff to show, in Theorem \ref{thm:logFalpha}, that if $\theta > \frac{1+3\alpha}{2}$ there exists a set $E$ in $F_{\alpha}$ such that for $h_{\theta} := x^{\frac{1+3\alpha}{2}}\log^{-\theta}\left(\frac{1}{x}\right)$ we have that
$\HH{h_\theta}(E) = 0$.  

The purpose of this section is twofold. First,  we will carefully re-trace Wolff's arguments to show what is the key to be able to sharpen the logarithmic gap to obtain \eqref{eq:mediumtrivial} instead of \eqref{eq:supertrivial}. Second, and more importantly, we analyze the proof in detail to understand the nontrivial modification to be performed in Section \ref{sec:gfurs}.

We begin with a preliminary lemma about a very well distributed (mod 1) sequence.

\begin{lemma}\label{lem:discre}
 For $n\in \N$ and any real number $x\in [0,1]$, there is a pair of natural numbers $0\le j,k\le n-1$, such that
\begin{equation*}
\left|x-\left(\sqrt{2}\frac{k}{n}-\frac{j}{n}\right)\right|\le\frac{\log(n)}{n^2}.
\end{equation*}
\end{lemma}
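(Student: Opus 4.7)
The plan is to reduce the claim, via a change of variables, to an equidistribution statement about the Kronecker sequence $(\{\sqrt{2}k\})_{k\in\N}$. First I would multiply the desired inequality by $n$, reformulating the goal as finding $k,j\in\{0,\dots,n-1\}$ with $|nx-(\sqrt{2}k-j)|\le\log(n)/n$. Writing $nx=l+r$ with $l=\lfloor nx\rfloor\in\{0,\dots,n\}$ and $r=\{nx\}\in[0,1)$, I would then set $j=\lfloor\sqrt{2}k\rfloor-l$ (switching to $\lceil\sqrt{2}k\rceil-l$ to handle wrap-around on the circle when convenient), which collapses the target inequality to the circle estimate $d_{\R/\Z}(\{\sqrt{2}k\},r)\le\log(n)/n$.

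Next I would check the side-condition $j\in\{0,\dots,n-1\}$, which forces $k$ into the sub-interval $I_l:=[l/\sqrt{2},(l+n)/\sqrt{2})\cap[0,n-1]$. A short inspection of the extreme cases $l=0$ and $l=n$ shows $|I_l|\ge(1-1/\sqrt{2})n$ uniformly in $l$, so $M:=|I_l|=\Theta(n)$. Since $\{\sqrt{2}(k_0+k')\}\equiv\{\sqrt{2}k_0\}+\sqrt{2}k'\pmod{1}$, the block $(\{\sqrt{2}k\})_{k\in I_l}$ is a rigid rotation of the initial segment of length $M$ of the Kronecker sequence and therefore inherits all of its equidistribution properties.

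The heart of the matter is then to show that every arc of length $2\log(n)/n$ on $\R/\Z$ meets the set $\{\{\sqrt{2}k'\}:k'=0,\dots,M-1\}$. Here I would exploit that the continued fraction expansion $\sqrt{2}=[1;2,2,2,\dots]$ has uniformly bounded partial quotients: by the three-distance theorem (or equivalently the Koksma/discrepancy estimate) the maximal gap between consecutive fractional parts $\{\sqrt{2}k'\}$ is $O(1/M)=O(1/n)$, which is much smaller than the $\log(n)/n$ tolerance required. Consequently, a suitable $k\in I_l$ exists for all sufficiently large $n$; small $n$ is trivial since then $\log(n)/n^2$ is of order one. The hard part will be this gap or discrepancy estimate, which crucially uses that $\sqrt{2}$ is badly approximable; once it is in hand, the remaining steps are routine bookkeeping.
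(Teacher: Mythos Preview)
Your proposal is correct and follows essentially the same route as the paper, which does not give a detailed argument but simply cites the Kuipers--Niederreiter discrepancy bound (Theorem~3.4 of \cite{kn74}) for the Kronecker sequence $\{k\sqrt{2}\}$, using that $\sqrt{2}$ has bounded partial quotients. Your reduction to a circle-distance estimate and appeal to the three-distance theorem is a more explicit, slightly more elementary version of the same idea, and in fact yields the sharper gap bound $O(1/n)$ without the logarithmic factor.
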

This lemma is a consequence of Theorem 3.4 of \cite{kn74}, p125, in which an estimate is given about the discrepancy of the
fractional part of the sequence $\{n\alpha\}_{n\in\N}$ where $\alpha$ is an irrational of a certain type.

We also need to introduce the notion of $G$-sets, a common ingredient in the construction of Kakeya and Furstenberg sets.

\begin{definition}
 A $G$-set is a compact set $E\sub\R^2$ which is contained in the strip $\{(x,y)\in\R^2:0\le x \le 1\}$ such that for any $m\in[0,1]$ there is a line segment contained in $E$ connecting $x=0$ with $x=1$ of slope $m$, i.e.
\[
\forall m\in[0,1]\ \exists \ b\in\R: mx+b\in E,\ \forall \ x\in
[0,1].
\]
\end{definition}
Finally we need some notation for a thickened line.
\begin{definition}
 Given a line segment $\ell(x)=mx+b$, we define the $\delta$-tube associated to $\ell$ as
\[
S_\ell^\delta:=\{(x,y)\in\R^2:0\le x\le 1; |y-(mx+b)|\le\delta\}.
\]
\end{definition}\index{S@$S_\ell^\delta$, $\delta$-tube around the line $\ell$.}

Now we are ready to prove the main result of this section.

\begin{theorem}\label{thm:logFalpha}
For $\alpha\in(0,1]$ and $\theta>0$, define
$h_\theta(x)=x^{\frac{1+3\alpha}{2}}\log^{-\theta}(\frac{1}{x})$.
Then, if $\theta>\frac{1+3\alpha}{2}$, there exists a set
$E\in F_\alpha$ with $\HH{h_\theta}(E)=0$.
\end{theorem}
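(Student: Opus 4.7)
The plan is to refine the classical Wolff construction that yields the bound $\dim \le \frac{1+3\alpha}{2}$, carefully tracking the logarithmic factor coming from the discrepancy estimate in \prettyref{lem:discre}, and showing that precisely this factor suffices to handle the gauge $h_\theta$ whenever $\theta > \frac{1+3\alpha}{2}$.

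First I work at a sequence of discrete scales: pick $n_k \to \infty$ (for instance $n_k = 2^k$) and set $\delta_k = \log(n_k)/n_k^2$, the accuracy produced by the lemma. At each level $k$, \prettyref{lem:discre} supplies a family $\mathcal{M}_k$ of $n_k^2$ slopes of the form $\sqrt{2}j/n_k - i/n_k$ that $\delta_k$-approximates every slope in $[0,1]$. For each $m \in \mathcal{M}_k$ I construct a discrete $\alpha$-dimensional ``fiber'' of $\sim n_k^{2\alpha}$ points inside the $\delta_k$-tube $S_{\ell_m}^{\delta_k}$. The essential combinatorial point, which is the main obstacle of the argument, is that these fibers are not chosen independently: they must all lie inside a common set $F_k$ built from a structured (arithmetic-progression-like) skeleton aligned with $\mathcal{M}_k$, so that a single $\delta_k$-ball can be shared by many different fibers. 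This sharing is what brings the total $\delta_k$-covering number of $F_k$ down to
\[
N_k \;\lesssim\; n_k^{1+3\alpha},
\]
rather than the naive product $n_k^{2+2\alpha}$. This is Wolff's mechanism and is the source of the exponent $\frac{1+3\alpha}{2}$.

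I then set $E = \bigcap_{K} \bigcup_{k \ge K} F_k$. To verify $E \in F_\alpha$, given a direction $e$ I choose $m_k \in \mathcal{M}_k$ with $|e - m_k| \le \delta_k$; a compactness/diagonalization argument in the unit box extracts a limit segment $\ell_e$ in direction $e$ such that the discrete fibers along the slopes $m_k$ accumulate inside $\ell_e \cap E$. A standard Borel--Cantelli / mass distribution argument on the discrete $\alpha$-dimensional fibers then yields $\dim_H(\ell_e \cap E) \ge \alpha$, exactly as in the classical Furstenberg construction of \cite{wol99b}; I would not reproduce this step in detail.

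The gauge estimate is then essentially automatic. The $\delta_k$-cover of $\bigcup_{k \ge K} F_k$ produced by the construction yields, at level $k$, a contribution to the $h_\theta$-Hausdorff premeasure bounded by
\[
N_k \, h_\theta(\delta_k) \;\lesssim\; n_k^{1+3\alpha} \, \delta_k^{(1+3\alpha)/2} \, \log^{-\theta}(1/\delta_k) \;\sim\; (\log n_k)^{(1+3\alpha)/2 - \theta},
\]
after substituting $\delta_k = \log(n_k)/n_k^2$ and using $\log(1/\delta_k) \sim 2\log n_k$. When $\theta > \frac{1+3\alpha}{2}$ this is summable (in fact geometrically decreasing) along the chosen subsequence, so $\HH{h_\theta}(E) = 0$. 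The logarithmic improvement over the polynomial Wolff bound is thereby obtained essentially for free: it comes from retaining the factor $\log(n_k)$ from \prettyref{lem:discre} through the computation rather than absorbing it into a constant, while the hard combinatorial step underlying $N_k \lesssim n_k^{1+3\alpha}$ is the same as in the original argument.
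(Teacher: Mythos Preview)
Your high-level idea---track the $\log n$ factor from \prettyref{lem:discre} through Wolff's construction---is exactly right, and the final gauge calculation is essentially the paper's. But the construction you sketch has a real gap in two places.

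\textbf{The $F_\alpha$ property.} You take $E=\bigcap_K\bigcup_{k\ge K}F_k$, a limsup of independently built scale-$k$ pieces, and then appeal to ``compactness/diagonalization'' and ``Borel--Cantelli/mass distribution'' to produce, for each direction $e$, a line $\ell_e$ with $\dim_H(\ell_e\cap E)\ge\alpha$. This step does not work as stated: the discrete fibers at level $k$ live on lines of slope $m_k\neq e$, and there is no reason a \emph{single} line $\ell_e$ should meet infinitely many $F_k$ in anything substantial. The paper avoids this by making the construction \emph{nested}: one starts with a $G$-set $F_0$ and, at stage $j$, pushes $G_{n_{j+1}}$ into each tube of $F_j$ by affine maps, so $F_{j+1}\subset F_j$. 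Then $F=\bigcap_j F_j$ is still a $G$-set and already contains a full segment $\ell$ in every direction. The $\alpha$-dimensional fiber is not rebuilt at each scale; it is a \emph{fixed} Jarn\'ik-type set $T'$ of dimension $\alpha$, and one simply sets $E=\{(x,y)\in F:x\in T'\}$, so $\ell\cap E$ is a bi-Lipschitz image of $T'$. The covering bound $N_j\lesssim n_j^{1+3\alpha}$ then comes from a concrete arithmetic fact you only allude to: at the $\lesssim n_j^{2\alpha}$ rational abscissae $t\in Q_{n_j}$ approximating $T'$, the $n_j^2$ lines $\ell_{jk}$ take only $\lesssim n_j^{1+\alpha}$ distinct values.

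\textbf{Summability versus growth.} With your suggested $n_k=2^k$, the level-$k$ contribution is $(\log n_k)^{(1+3\alpha)/2-\theta}\sim k^{(1+3\alpha)/2-\theta}$, which is \emph{not} summable (and certainly not geometrically decreasing) for $\frac{1+3\alpha}{2}<\theta\le\frac{3+3\alpha}{2}$; your limsup argument would thus miss the claimed threshold by $1$. The nested construction sidesteps summation entirely: since $E\subset E_j$ for every $j$, each level alone is a $\delta_j$-cover and one only needs the single term to tend to zero. The cost of nesting is an extra multiplicity factor $M_{j-1}$ (the number of tubes at the previous stage), which the paper kills by imposing the very rapid growth condition $\log\log n_{j+1}>M_j$, so that $M_{j-1}<\log\log n_j$ is absorbed into an arbitrarily small power of $\log n_j$.
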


\begin{proof}
 
Fix $n\in\N$ and let $n_j$  be a sequence such that $n_{j+1}>n_j^j$. We consider $T$ to be the set defined as follows:
\begin{equation*}
T=\left\{x\in\left[\frac{1}{4},\frac{3}{4}\right]: \forall j \
\exists\ p,q\ ; q\le n_j^\alpha;
|x-\frac{p}{q}|<\frac{1}{n_j^2}\right\}.
\end{equation*}

It can be seen that $\dim_H(T)=\alpha$ (see Section \ref{sec:fiber}, Theorem \ref{thm:jarnikA}). 

If $\varphi(t)=\frac{1-t}{t\sqrt{2}}$ and $D=\varphi^{-1}\left([\frac{1}{4},\frac{3}{4}]\right)$, we have that $\varphi:D\to [\frac{1}{4},\frac{3}{4}]$ is bi-Lipschitz. Therefore the set
\begin{equation*}
T'=\left\{t\in\R:\frac{1-t}{t\sqrt{2}}\in
T\right\}=\varphi^{-1}(T)
\end{equation*}
also has Hausdorff dimension $\alpha$.

The main idea of our proof is to construct a set for which we have,
essentially, a copy of $T'$ in each direction and simultaneously
keep some optimal covering property.

Define, for each $n\in \N$,
\begin{equation*}
\Gamma_n:=\left\{\frac{p}{q}\in\left[\frac{1}{4},\frac{3}{4}\right],
q\le n^\alpha\right\}
\end{equation*}
and
\begin{equation*}
Q_n=\left\{ t:\frac{1-t}{\sqrt{2}t}=\frac{p}{q}\in
\Gamma_n\right\}=\varphi^{-1}(\Gamma_n).
\end{equation*}

To count the elements of $\Gamma_n$ (and $Q_n$), we take into
account that
\begin{equation*}
\sum_{j=1}^{\lfloor n^{\alpha}\rfloor}j\le \frac{1}{2} \lfloor
n^{\alpha}\rfloor(\lfloor n^{\alpha}\rfloor+1) \lesssim \lfloor
n^{\alpha}\rfloor^2\le n^{2\alpha}.
\end{equation*}
Therefore, $\#(Q_n)\lesssim n^{2\alpha}$. 

For $0\le j,k\le n-1$, define the line segments
\[
\ell_{jk}(x):=(1-x)\frac{j}{n}+x\sqrt{2}\frac{k}{n} \text{ for }
x\in[0,1],
\]
and their $\delta_n$-tubes $S_{\ell_{jk}}^{\delta_n}$ with
$\delta_n=\frac{\log(n)}{n^2}$. We will use during the proof the
notation $S_{jk}^n$ instead of $S_{\ell_{jk}}^{\delta_n}$. Also
define
\begin{equation}\label{eq:Gn}
G_n:=\bigcup_{jk}S_{jk}^n.
\end{equation}
Note that, by Lemma \ref{lem:discre}, all the $G_n$ are $G$-sets. 

For each $t\in Q_n$, we look at the points $\ell_{jk}(t)$, and 
define the set $S(t):=\{\ell_{jk}(t)\}_{j,k=1}^n$. Clearly, $\#(S(t))\le
n^2$. But if we note that, if $t\in Q_n$, then
\begin{equation*}
0\le\frac{\ell_{jk}(t)}{t\sqrt{2}}=\frac{1-t}{t\sqrt{2}}\frac{j}{n}+\frac{k}{n}=\frac{p}{q}\frac{j}{n}+\frac{k}{n}=\frac{pj+kq}{nq}<2,
\end{equation*}
we can bound $\#(S(t))$ by the number of non-negative rationals
smaller than 2 of denominator $qn$. Since $q\le n^\alpha$, we
have $\#(S(t))\le n^{1+\alpha}$. Considering \emph{all} the
elements of $Q_n$, we obtain $\#\left(\bigcup_{t\in
Q_n}S(t)\right)\lesssim n^{1+3\alpha}$. Let us define
\begin{equation*}
\Lambda_n:=\left\{(x,y)\in G_n: |x-t|\le \frac{\sqrt{2}}{n^2}\text{ for
some } t\in Q_n\right\}.
\end{equation*}

\begin{claim}
 For each $n$, take $\delta_n=\frac{\log(n)}{n^2}$. Then $\Lambda_n$ can be covered by $L_n$ balls of radius $\delta_n$ with $L_n \lesssim n^{1+3\alpha}$.
\end{claim}
To see this, it suffices to set a parallelogram on each point of
$S(t)$ for each $t$ in $Q_n$. The lengths of the sides of the
parallelogram are of order $n^{-2}$ and $\frac{\log(n)}{n^2}$, so
their diameter is bounded by  a constant time
$\frac{\log(n)}{n^2}$, which proves the claim.

{\em Back to the proof of the Theorem:} We   can now begin with the recursive construction that leads to the
desired set. The starting point $F_0$ will be essentially any $G$-set. We only add the assumption that it can be written as
\begin{equation*}
F_0=\bigcup_{i=1}^{M_0}S_{\ell_i^0}^{\delta^0},
\end{equation*}
(the union of $M_0$ line segments
$\ell^0_i=m^0_i+b^0_i$ with appropriate orientation and $\delta^0$-thickened). Each $F_j$
to be constructed will be a $G$-set of the form
\begin{equation*}
F_j:=\bigcup_{i=1}^{M_j}S_{\ell_i^j}^{\delta^j}, \qquad \text{
with } \ \ell^j_i=m^j_i+b^j_i.
\end{equation*}
 Having constructed $F_j$, consider the $M_j$ affine mappings
\begin{equation*}
A^j_i:[0,1]\times[-1,1]\rightarrow
S_{\ell^j_{i}}^{\delta^j}\qquad 1\le i\le M_j,
\end{equation*}
defined by
\begin{equation*}
  A^j_i\left(\begin{array}{c}
    x\\
    y
\end{array}\right)=\left(\begin{array}{cc}
    1 & 0\\
    m_i^j & \delta^j
\end{array}\right)\left(\begin{array}{c}
    x\\
    y
\end{array}\right)+\left(\begin{array}{c}
    0\\
    b^j_{i}
\end{array}\right).
\end{equation*}

Here is the key step: by the definition of  $T$, we can choose the sequence $n_{j}$ to grow as  fast  as we need  (this will not be the case in the next section). For example, we can choose $n_{j+1}$ large enough to satisfy
\begin{equation}\label{eq:njloglogMj}
\log\log(n_{j+1})>M_j
\end{equation}
and apply $A_i^j$ to the sets $G_{n_{j+1}}$ defined in
\eqref{eq:Gn} to obtain
\begin{equation*}
F_{j+1}=\bigcup_{i=1}^{M_j}A_i^j(G_{n_{j+1}}).
\end{equation*}
Since $G_{n_{j+1}}$ is a union of thickened line segments, we
have that
\begin{equation*}
F_{j+1}=\bigcup_{i=1}^{M_{j+1}}S_{\ell_i^{j+1}}^{\delta^{j+1}},
\end{equation*}
for an appropriate choice of $M_{j+1}$, $\delta_{j+1}$ and
$M_{j+1}$ line segments $\ell_i^{j+1}$. From the definition of
the mappings $A_i^j$ and since the set $G_{n_{j+1}}$ is a
$G$-set, we conclude that $F_{j+1}$ is also a $G$-set. Define
\begin{equation*}
E_j:=\{(x,y)\in F_j: x\in T'\}.
\end{equation*}
To cover $E_j$, we note that if $(x,y)\in E_j$, then $x\in T'$,
and therefore there exists a rational $\frac{p}{q}\in
\Gamma_{n_j}$ with
\begin{equation*}
\frac{1}{n_j^2}>\left|\frac{1-x}{x\sqrt{2}}-\frac{p}{q}\right|=|\varphi(x)-\varphi(r)|\ge\frac{|x-r|}{\sqrt{2}},\qquad
\text{ for some }\ r\in Q_{n_j}.
\end{equation*}
Therefore $(x,y)\in
\bigcup_{i=1}^{M_{j-1}}A^{j-1}_i(\Lambda_{n_j})$, so we conclude
that $E_j$ can be covered by $M_{j-1}n_{j}^{1+3\alpha}$ balls of
diameter at most $\frac{\log(n_{j})}{n_{j}^2}$. By our choice of
$n_j$, we obtain that $E_{j}$
admits a covering by $\log\log(n_{j})n_{j}^{1+3\alpha}$ balls of
the same diameter. Therefore, if we set $F=\bigcap_j F_j$ and
$E:=\{(x,y)\in F: x\in T'\} $ we obtain that
\begin{eqnarray*}
\HH{h_\theta}_{\delta_j}(E) & \lesssim  & n^{1+3\alpha}_{j}\left(\log\log(n_j)\right)h_\theta\left(\frac{\log(n_j)}{n^2_j}\right)\\
    & \lesssim & n^{1+3\alpha}_{j}\left(\log\log(n_j)\right)\left(\frac{\log(n_j)}{n^2_j}\right)^{\frac{1+3\alpha}{2}}\log^{-\theta}\left(\frac{n^2_j}{\log(n_j)}\right)\\
    & \lesssim & \left(\log\log(n_j)\right)\log(n_j)^{\frac{1+3\alpha}{2}-\theta}\lesssim  \log^{\frac{1+3\alpha}{2}+\e-\theta}(n_j)
\end{eqnarray*}
for large enough $j$. Therefore, for any
$\theta>\frac{1+3\alpha}{2}$, the last expression goes to zero.
In addition, $F$ is a $G$-set, so it must contain a line segment
in each direction $m\in[0,1]$. If $\ell$ is such a line segment,
then
\begin{equation*}
\dim_H(\ell\cap E)=\dim_H(T')\ge\alpha.
\end{equation*}
The final set of the proposition is obtained by taking eight
copies of $E$, rotated to achieve \emph{all} the directions in
$\s$.
\end{proof}

\section{Upper Bounds for small Furstenberg-type Sets}\label{sec:gfurs}

In this section we will focus on the class $F_\alpha$ at the
endpoint $\alpha=0$. Note that all preceding results involved only the case for which $\alpha > 0$. Introducing the generalized Hausdorff measures, we are able to handle an important class of Furstenberg type sets in $F_0$. 

The idea is to follow the proof of Theorem \ref{thm:logFalpha}. But in order to do that, we need to replace the set $T$ by a generalized version of it. A na\"ive approach would be to replace the $\alpha$ power in the definition of $T$ by a slower increasing function, like a logarithm. But in this case it is not clear that the set $T$ fulfills the condition of having positive measure for the corresponding dimension function (recall that we want to construct a set in $F_{\h_\gamma}$).  More precisely, we will need the following lemma.

\begin{lemma}\label{lem:fiber}
Let $r>1$ and consider the sequence $\n=\{n_j\}$ defined by $n_j=e^{\frac{1}{2}n_{j-1}^{\frac{4}{r}j}}$, the function 
$\f(x)=2\log(x^2)^\frac{r}{2}$ and the set
\begin{equation*}
T=\left\{x\in\left[\frac{1}{4},\frac{3}{4}\right]\setminus \Q: \forall j \ \exists\ p,q\ ; q\le
\f(n_j); |x-\frac{p}{q}|<\frac{1}{n_j^2}\right\}.
\end{equation*}
Then we have that $\HH{\h}(T)>0$ for $\h(x)=\frac{1}{\log(\frac{1}{x})}$.
\end{lemma}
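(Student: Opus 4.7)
The plan is to construct a Jarn\'ik-type Cantor subset $T^\ast \subseteq T$ together with a natural Cantor probability measure $\mu$ on $T^\ast$, and then invoke the mass distribution principle for the logarithmic gauge $\h(x) = 1/\log(1/x)$.

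First, for each $j \ge 1$ I would let $\mathcal{R}_j = \{p/q : \gcd(p,q)=1,\, p/q \in [\tfrac{1}{4},\tfrac{3}{4}],\, q \le \f(n_j)\}$, and around each $p/q \in \mathcal{R}_j$ form the closed interval $I_j(p/q) := [p/q - n_j^{-2},\, p/q + n_j^{-2}]$. Since distinct rationals of denominator at most $Q$ are at distance at least $1/Q^2$ and $n_j$ grows so fast that $n_j^{-2} \ll \f(n_j)^{-2}$, these intervals are pairwise disjoint at every level. Because $\mathcal{R}_{j-1} \subseteq \mathcal{R}_j$, the interval $I_j(p/q)$ automatically lies inside $I_{j-1}(p/q)$ for each $p/q \in \mathcal{R}_{j-1}$, so the family nests naturally. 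I would then set $T^\ast_0 := [\tfrac{1}{4},\tfrac{3}{4}]$ and iteratively $T^\ast_j := \bigcup\{I_j(p/q) : p/q \in \mathcal{R}_j\text{ and } I_j(p/q) \text{ lies inside some component of } T^\ast_{j-1}\}$, with $T^\ast := \bigcap_j T^\ast_j \subseteq T$.

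Next, a standard Farey count gives that inside each parent component of $T^\ast_{j-1}$ there are $\tilde m_j \sim \f(n_j)^2/n_{j-1}^2$ valid children, and the recursion $n_j = \exp\bigl(\tfrac{1}{2} n_{j-1}^{4j/r}\bigr)$ is calibrated precisely so that $\f(n_j)^2 = (2\log n_j)^r = n_{j-1}^{4j}$, hence $\tilde m_j \sim n_{j-1}^{4j-2}$. I take $\mu$ to be the Cantor measure assigning equal mass $1/\tilde N_j$ (with $\tilde N_j := \prod_{i=1}^{j} \tilde m_i$) to each surviving level-$j$ component, mass split equally among its children at each subsequent step; $\mu$ is then a Borel probability measure supported on $T^\ast$. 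The crux is the Frostman-type bound $\mu(B(x,\delta)) \le C\,\h(\delta)$ for $\delta$ small, after which the mass distribution principle yields $\HH{\h}(T) \ge \HH{\h}(T^\ast) \ge 1/C > 0$. Given small $\delta$, I pick the unique $j$ with $2/n_{j+1}^2 \le \delta < 2/n_j^2$ and split into two sub-regimes. If $\delta \ge 1/\f(n_{j+1})^2$, the ball $B(x,\delta)$ meets $\lesssim \delta\,\f(n_{j+1})^2$ level-$(j+1)$ components, each of $\mu$-mass $1/\tilde N_{j+1}$; the identity $\tilde N_{j+1} = \tilde N_j\,\tilde m_{j+1}$ with $\tilde m_{j+1} \sim \f(n_{j+1})^2/n_j^2$ simplifies this to $\mu(B(x,\delta)) \lesssim \delta\,n_j^2/\tilde N_j$, which against $\log(1/\delta) \lesssim \log n_j$ is comparable to $\h(\delta)$. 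If $\delta < 1/\f(n_{j+1})^2$, the ball overlaps at most two level-$(j+1)$ intervals, so $\mu(B(x,\delta)) \lesssim 1/\tilde N_{j+1}$; combined with $\log(1/\delta) \le 2\log n_{j+1}$ and the hypothesis $r>1$, this again yields $\mu(B(x,\delta)) \lesssim \h(\delta)$.

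The hard part will be closing the case analysis at the transition scale $\delta \sim 1/\f(n_{j+1})^2$, where both sub-case bounds must meet and together dominate the logarithmic gauge; this is exactly what pins down the precise exponent $4j/r$ in the recursive definition of $n_j$. A secondary bookkeeping issue is handling the initial few levels, where $\tilde N_j$ is only comparable to (rather than much larger than) $\log n_j$; choosing the starting value $n_0 = n_0(r)$ sufficiently large absorbs the resulting constants into the universal constant $C$.
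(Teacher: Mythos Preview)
Your approach is sound in outline and parallel to the paper's in that both build a Cantor subset of $T$ and invoke the mass distribution principle for the gauge $\h$. The route differs: you work directly inside $T$, nesting intervals centred at Farey fractions of order $\f(n_j)$, whereas the paper detours through the well-approximable set $B_\g$ with $\g(x)=e^{x^{2/r}}$, constructs a Cantor subset $E\subset B_\g$ using \emph{primes} in dyadic windows (so the children are automatically equi-separated and equi-numerous, with no Farey error term to control), proves $\HH{\h}(E)>0$ via a general Cantor lemma for concave gauges, and only afterwards observes $E\subset T$ under the change of variables $n_k=\g(m_k)^{1/2}$. Your route is more direct and sidesteps the concavity hypothesis; the paper's is more modular, and the prime device keeps the child-count entirely elementary.

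There is, however, a genuine slip in your closing remark. For $r$ close to $1$ (concretely $1<r<4/3$) the ratio $\tilde N_j/\log n_j$ at the first levels is \emph{not} controlled by enlarging $n_0$: at $j=2$ one has $\tilde N_2\sim n_0^{4}\,n_1^{6}$ while $\log n_2\sim\tfrac12\,n_1^{8/r}$, and since $8/r>6$ and $n_1=\exp\bigl(\tfrac12 n_0^{4/r}\bigr)$, taking $n_0$ large sends $\tilde N_2/\log n_2\to 0$, not to a large constant. The correct (and equally routine) fix is to leave the sequence alone and apply the mass distribution principle only for $\diam(U)<\varepsilon$ with $\varepsilon=2/n_J^{2}$ and any fixed $J>\frac{r}{2(r-1)}$: for $j\ge J$ one has $4j-2>4j/r$, hence $\tilde m_j\sim n_{j-1}^{4j-2}\gg n_{j-1}^{4j/r}\sim 2\log n_j$, so $\tilde N_j\ge\tilde m_j$ dominates $\log n_j$ and both of your sub-cases then close with a uniform constant. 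With that single adjustment the argument goes through.
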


\begin{remark}
We postpone the proof of this lemma to the next section, but we emphasize the following fact: in this  case, the construction of this new set $T$, does not allow us, as in \eqref{eq:njloglogMj}, to freely choose the sequence $n_j$. On one  hand we need the sequence to be quickly increasing to prove that the desired set is small enough, but not arbitrarily fast, since on the other hand, we need to impose some control to be able to prove that the fiber has the appropriate \emph{positive} measure. 
\end{remark}

 With this lemma, we are able to prove the main result of this section. We have the next theorem.

\begin{theorem}\label{thm:sqrth3/2}
Let $\h=\frac{1}{\log(\frac{1}{x})}$. There exists a set $E\in
F_\h$ such that $\dim_H(E)\le \frac{1}{2}$.
\end{theorem}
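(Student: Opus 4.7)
The plan is to mimic the construction of Theorem \ref{thm:logFalpha} almost verbatim, but with the Jarn\'ik-type set $T$ (which had Hausdorff dimension $\alpha$) replaced by the log-Jarn\'ik set $T$ from \prettyref{lem:fiber}, which has $\HH{\h}(T)>0$. Concretely, fix $r>1$, let $n_j=e^{\frac12 n_{j-1}^{4j/r}}$ and $\f(x)=\log(x^2)^{r/2}$ be the data of \prettyref{lem:fiber}, set $T$ as in that lemma and $T':=\varphi^{-1}(T)$ where $\varphi(t)=(1-t)/(t\sqrt2)$ is the bi-Lipschitz map of Section~2. Then redo the rational approximation bookkeeping: let
\[
\Gamma_n=\bigl\{\tfrac{p}{q}\in[\tfrac14,\tfrac34] : q\le \f(n)\bigr\},\qquad Q_n=\varphi^{-1}(\Gamma_n),
\]
and run the same recursive $G$-set construction $F_{j+1}=\bigcup_i A_i^j(G_{n_{j+1}})$ using the prescribed sequence $n_j$. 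Put $F=\bigcap_j F_j$ and $E=\{(x,y)\in F:x\in T'\}$.

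The counting of the cover is the only substantive change. Since now $q\le \f(n)$ instead of $q\le n^\alpha$, one has $\#\Gamma_n\lesssim \f(n)^2$ and, for $t\in Q_n$, the same computation
\[
\tfrac{\ell_{jk}(t)}{t\sqrt2}=\tfrac{pj+kq}{nq}<2
\]
gives $\#S(t)\le 2n\f(n)$, hence $\#\bigl(\bigcup_{t\in Q_n}S(t)\bigr)\lesssim n\,\f(n)^{3}$. Thus $\Lambda_n$ is covered by $\lesssim n\,\f(n)^{3}$ parallelograms of diameter $\lesssim \log(n)/n^2$, which is the correct endpoint analogue of the $n^{1+3\alpha}$ bound in \prettyref{thm:logFalpha}.

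For the dimension bound fix any $s>\tfrac12$. Exactly as in \prettyref{thm:logFalpha}, the set $E_j:=\{(x,y)\in F_j:x\in T'\}$ is covered by $M_{j-1}\cdot n_j\,\f(n_j)^{3}$ balls of diameter $\delta_j=\log(n_j)/n_j^2$, where $M_{j-1}\lesssim \prod_{k<j}n_k^2$ grows only polynomially in $n_{j-1}$. Therefore
\[
\HH{s}_{\delta_j}(E)\lesssim M_{j-1}\,n_j\,\f(n_j)^3\Bigl(\tfrac{\log n_j}{n_j^2}\Bigr)^{s}\lesssim M_{j-1}\,n_j^{\,1-2s}\,(\log n_j)^{s+3r/2}.
\]
The doubly-exponential growth $n_j=e^{n_{j-1}^{4j/r}/2}$ makes $n_j^{\,1-2s}$ decay faster than any polynomial in $n_{j-1}$, so the right-hand side tends to $0$. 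Hence $\HH{s}(E)=0$ for every $s>\tfrac12$, and $\dim_H(E)\le \tfrac12$.

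It remains to verify that $E\in F_\h$. Since each $F_j$ is a $G$-set and $F=\bigcap_j F_j$ is a decreasing intersection of compact $G$-sets, $F$ itself is a $G$-set, so it contains a line segment $\ell_m$ of slope $m$ for every $m\in[0,1]$. By construction $\ell_m\cap E=\{(x,mx+b_m):x\in T'\}$ is the graph of an affine map over $T'=\varphi^{-1}(T)$; since this graph is bi-Lipschitz equivalent to $T$, and since $\h(x)=1/\log(1/x)$ is log-regular in the sense that $\h(cx)\sim \h(x)$ as $x\to 0$ for any constant $c>0$, the measure transforms as $\HH{\h}(\ell_m\cap E)\sim \HH{\h}(T)>0$ by \prettyref{lem:fiber}. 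Finally, taking eight rotated copies of $E$ (as at the end of \prettyref{thm:logFalpha}) produces a line segment in every direction of $\s$, giving the desired $F_\h$-set without changing the dimension. The main technical obstacles are the two bookkeeping points: the careful counting $\#S(t)\le 2n\f(n)$ which must now track $\f(n)$ rather than $n^\alpha$, and the verification that the fixed, very rapid sequence $n_j$ from \prettyref{lem:fiber} is still fast enough to overwhelm the multiplicative constant $M_{j-1}$ in the covering estimate; both reduce to direct computations once the framework of \prettyref{thm:logFalpha} is in place.
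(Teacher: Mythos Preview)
Your proposal is correct and follows the paper's own argument essentially line by line: the same bi-Lipschitz transfer of $T$ to $T'$, the same $\Gamma_n,Q_n,S(t),\Lambda_n$ bookkeeping with $\f(n)$ replacing $n^\alpha$ (yielding $\#\bigl(\bigcup_{t\in Q_n}S(t)\bigr)\lesssim n\log(n)^{3r/2}$), and the same verification that the fixed sequence $n_j$ from \prettyref{lem:fiber} grows fast enough that $M_{j-1}\le\log n_j$. The only cosmetic difference is that the paper phrases the final size estimate via the lower box-dimension formula $\underline{\dim}_B(E)\le\varliminf_j \log(M_{j-1}n_j\log(n_j)^{3r/2})/\log(n_j^2/\log n_j)=\tfrac12$, whereas you compute $\HH{s}_{\delta_j}(E)\to 0$ directly for each $s>\tfrac12$; these are equivalent.
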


\begin{proof}
We will use essentially a copy of $T$ in each direction in the construction of the desired set to fulfill the conditions required to be an $F_\h$-set. Let $T$ be the set defined in Lemma \ref{lem:fiber}. Define $T'$ as
\begin{equation*}
T'=\left\{t\in\R:\frac{1-t}{t\sqrt{2}}\in
T\right\}=\varphi^{-1}(T),
\end{equation*}
where $\varphi$ is the same bi-Lipschitz function from the proof of Theorem \ref{thm:logFalpha}. Then $T'$ has positive $\HH{\h}$-measure. Let us define the
corresponding sets of Theorem \ref{thm:logFalpha} for this generalized case.
\begin{equation*}
\Gamma_n:=\left\{\frac{p}{q}\in\left[\frac{1}{4},\frac{3}{4}\right],
q\le \f(n)\right\},
\end{equation*}
\begin{equation*}
Q_n=\left\{ t:\frac{1-t}{\sqrt{2}t}=\frac{p}{q}\in
\Gamma_n\right\}=\varphi^{-1}(\Gamma_n).
\end{equation*}
Now the estimate is $\#(Q_n)\lesssim
\f^{2}(n)=\log^r(n^2)\sim\log^r(n)$, since
\begin{equation*}
\sum_{j=1}^{\lfloor \f(n)\rfloor}j\le \frac{1}{2} \lfloor
\f(n)\rfloor(\lfloor \f(n)\rfloor+1) \lesssim \lfloor
\f(n)\rfloor^2.
\end{equation*}
For each $t\in Q_n$, define $S(t):=\{\ell_{jk}(t)\}_{j,k=1}^n$.
If $t\in Q_n$, following the previous ideas, we obtain that
\begin{equation*}
\#(S(t))\lesssim n\log^\frac{r}{2}(n),
\end{equation*}
and therefore
\begin{equation*}
\#\left(\bigcup_{t\in Q_n}S(t)\right)\lesssim
n\log(n)^{\frac{3r}{2}}.
\end{equation*}
Now we estimate the size of a covering of
\begin{equation*}
\Lambda_n:=\left\{(x,y)\in G_n: |x-t|\le \frac{\sqrt{2}}{n^2}\text{ for
some } t\in Q_n\right\}.
\end{equation*}
For each $n$, take $\delta_n=\frac{\log(n)}{n^2}$. As before, the
set $\Lambda_n$ can be covered with  $L_n$ balls of radius
$\delta_n$ with $L_n\lesssim n\log(n)^\frac{3r}{2}$.

Once again, define $F_j$, $F$, $E_j$ and $E$ as before. Now the
sets $F_j$ can be covered by fewer than
$M_{j-1}n_j\log(n_j)^\frac{3r}{2}$ balls of diameter at most
$\frac{\log(n_{j})}{n_{j}^2}$. Now we can verify that, since each
$G_n$ consist of $n^2$ tubes, we have that $M_j=M_0n_1^2\cdots
n_j^2$.

Now we are again at the key point: by the choice of the sequence $\{n_j\}$ in the definition of $T$, we can also verify that the sequence  satisfies
the relation $\log{n_{j+1}}\ge M_{j}=M_0n_1^2\cdots n_j^2$, and
therefore we have the bound

\begin{equation*}
\dim_H(E)\le\underline{\dim}_B(E)\le\varliminf_j\frac{\log\left(\log(n_{j})
n_j\log(n_j)^\frac{3r}{2}
\right)}{\log\left(n_{j}^2\log^{-1}(n_j)\right)}=\frac{1}{2},
\end{equation*}
 where $\underline{\dim}_B$ stands for the lower box dimension.
Finally, for any $m\in[0,1]$ we have a line segment $\ell$ with
slope $m$ contained in $F$. It follows that $\HH{\h}(\ell\cap
E)=\HH{\h}(T')>0$.
\end{proof}

We remark that the argument in this particular result is
essentially the same needed to obtain the family of Furstenberg
sets $E_\gamma\in F_{\h_\gamma}$ for
$\h_\gamma(x)=\frac{1}{\log^\gamma(\frac{1}{x})}$,
$\gamma\in\R_+$, such that $\dim_H(E_\gamma)\le \frac{1}{2}$
announced in the Introduction.

\section{Proof of Lemma \ref{lem:fiber}}\label{sec:fiber}

The purpose of this section is to prove Lemma \ref{lem:fiber}. Our proof relies on a variation of a Jarn\'ik type theorem on Diophantine approximation.  We begin with some preliminary results on Cantor type constructions that will be needed.

\subsection{Cantor sets}\label{sec:cantor}

In this section we introduce the construction of sets of Cantor
type in the spirit of \cite{fal03}. By studying two quantities,
the number of children of a typical interval and some separation
property, we obtain sufficient conditions on these quantities
that imply the positivity of the $h$-dimensional measure for a
test function $h\in\H$.

We will need a preliminary elemental lemma about concave
functions. The proof is straightforward.
\begin{lemma}\label{lem:lemaconcava}
Let $h\in \H$ be a concave dimension function. Then
\[
 \min\{a,b\}\le \frac{a}{h(a)}h(b)\quad\text{ for any } a,b\in\R_+.
\]
\end{lemma}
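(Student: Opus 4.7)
The plan is to prove the inequality by a straightforward case split on whether $a \le b$ or $b \le a$, invoking monotonicity of $h$ in one case and concavity (through the classical fact that $h(x)/x$ is non-increasing when $h(0)=0$) in the other.

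First I would handle the easy case $a \le b$. Since $\min\{a,b\} = a$, the desired inequality $\min\{a,b\}\le \frac{a}{h(a)}h(b)$ becomes $a \le \frac{a}{h(a)}h(b)$, i.e., $h(a) \le h(b)$, which is immediate from the fact that $h \in \H$ is non-decreasing. (If $h(a) = 0$ one needs a small separate remark: then $h(b) \ge h(a) = 0$ and the inequality $\min\{a,b\}\le \frac{a}{h(a)}h(b)$ should be read in the limiting sense, but in the relevant application $h(a) > 0$ whenever $a > 0$, so we may assume $h(a) > 0$.)

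Next I would handle the case $b < a$, which is the heart of the lemma. Here $\min\{a,b\} = b$, and after multiplying by $h(a)/a > 0$ the inequality to be shown is equivalent to
\[
\frac{h(b)}{b} \;\ge\; \frac{h(a)}{a}.
\]
In other words, I must show that the chord-slope function $x \mapsto h(x)/x$ is non-increasing on $(0,\infty)$. The key step is to write $b$ as the convex combination $b = \frac{b}{a}\cdot a + \bigl(1 - \frac{b}{a}\bigr)\cdot 0$ (which is legitimate since $0 < b/a < 1$) and apply concavity of $h$ together with $h(0)=0$:
\[
h(b) \;\ge\; \frac{b}{a}\,h(a) + \Bigl(1-\frac{b}{a}\Bigr)h(0) \;=\; \frac{b}{a}\,h(a).
\]
Dividing by $b$ yields $h(b)/b \ge h(a)/a$, which is exactly what was needed.

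Combining the two cases gives $\min\{a,b\} \le \frac{a}{h(a)}h(b)$ for all $a,b \in \R_+$. There is no real obstacle here; the only subtlety is the boundary behaviour at $h(a)=0$, which is harmless since the inequality is trivial in that degenerate case (both sides can be interpreted as $0$, or one restricts attention to $a$ with $h(a) > 0$, the only situation in which the lemma is used).
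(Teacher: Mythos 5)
Your proof is correct and follows essentially the same route as the paper: the same case split on $a\le b$ versus $b<a$, with monotonicity handling the first case and the non-increasing nature of $x\mapsto h(x)/x$ (equivalently, concavity together with $h(0)=0$) handling the second. The only difference is that you spell out the convex-combination argument for $h(b)/b\ge h(a)/a$, which the paper simply cites as ``by concavity of $h$''.
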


\begin{proof}
Since $h$ is concave, we have $h\left(\frac{x+y}{2}\right) \leq \frac{h(x)+h(y)}{2}$. We consider two separate cases:
\begin{itemize}
    \item If $b\ge a$ then $\frac{a}{h(a)}h(b)\ge\frac{a}{h(a)}h(a)=a=\min\{a,b\}$.
    \item If $a> b$, then $\frac{a}{h(a)}h(b)\ge\frac{b}{h(b)}h(b)=b=\min\{a,b\}$ by concavity of $h$.    
\end{itemize}
\end{proof}
The following lemma is a natural extension of the ``Mass
Distribution Principle'' to the dimension function setting.
\begin{lemma}[$h$-dimensional mass distribution principle]\label{lem:massh}
 Let $E\sub\R^n$ be a set, $h\in\H$ and $\mu$ a probability measure on $E$. Let $\e>0$ and $c>0$ be positive constants such that for any $U\sub\R^n$ with $\emph{\diam}(U)<\e$ we have
\begin{equation*}
 \mu(U)\le c h(\emph{\diam}(U)).
\end{equation*}
Then $\HH{h}(E)>0$.
\end{lemma}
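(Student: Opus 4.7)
The plan is to follow the classical Mass Distribution Principle for power-function Hausdorff measures, simply replacing the test function $x^s$ by the general dimension function $h\in\H$. The hypothesis is already formulated as a pointwise domination of $\mu$ by $c\, h\circ\diam$ on all sufficiently small sets, so the proof reduces to testing this inequality against an arbitrary fine cover of $E$ and bundling the pieces via countable subadditivity of $\mu$.

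First I would fix $\delta\in(0,\e)$ and let $\{U_i\}$ be any cover of $E$ with $\diam(U_i)<\delta$ for every $i$. Since $\mu$ is a probability measure supported on $E$, countable subadditivity yields
\[
1 \;=\; \mu(E) \;=\; \mu\Bigl(E\cap \bigcup_i U_i\Bigr) \;\le\; \sum_i \mu(U_i).
\]
Each $U_i$ satisfies $\diam(U_i)<\e$, so the standing hypothesis gives $\mu(U_i)\le c\, h(\diam(U_i))$; summing then produces $\sum_i h(\diam(U_i))\ge 1/c$. Taking the infimum over all admissible $\delta$-covers yields $\hh{h}(E)\ge 1/c$, and since this lower bound is independent of $\delta$, passing to the supremum as in \prettyref{eq:h-meas} gives $\HH{h}(E)\ge 1/c>0$, as required.

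I do not anticipate any genuine obstacle. The only routine technicality is that an arbitrary cover need not consist of $\mu$-measurable sets, which is handled in the standard way by replacing each $U_i$ with a slightly enlarged open set $V_i\supseteq U_i$ with $\diam(V_i)\le (1+\eta)\diam(U_i)$, applying the hypothesis to $V_i$, invoking monotonicity and right-continuity of $h$, and finally sending $\eta\to 0$; alternatively, one restricts from the outset to covers by closed balls without loss of generality. Observe that neither concavity of $h$ nor \prettyref{lem:lemaconcava} is used here — they will be needed only in the Cantor-type construction where \prettyref{lem:massh} is subsequently applied to produce the measure $\mu$ required by the hypothesis.
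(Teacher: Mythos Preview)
Your argument is correct and matches the paper's own proof essentially verbatim: both bound $\mu(E)$ from above by $c\sum_i h(\diam(U_i))$ via subadditivity and the hypothesis, then take the infimum over $\delta$-covers to get $\hh{h}(E)\ge \mu(E)/c$ and hence $\HH{h}(E)>0$. Your added remarks on measurability and the irrelevance of concavity are extra commentary not present in the paper, but they do not alter the approach.
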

\begin{proof}
For any $\delta$-covering we have
\begin{equation*}
 0<\mu(E)\le \sum_i\mu(U)\le c\sum_i h(\diam(U)).
\end{equation*}
Then $\hh{h}>\frac{\mu(E)}{c}$ and therefore $\HH{h}(E)>0$.
\end{proof}
Now we present the construction of a Cantor-type set (see Example 4.6 in \cite{fal03}).
\begin{lemma}\label{lem:cantorgeneral}
Let $\{E_k\}$ be a decreasing sequence of closed subsets of the
unit interval. Set $E_0=[0,1]$ and suppose that the following
conditions are satisfied:
\begin{enumerate}
    \item Each $E_k$ is a finite union of closed intervals $I^k_j$.
    \item \label{eq:mk} Each level $k-1$ interval contains at least $m_k$ intervals of level $k$. We will refer to these as the ``children'' of an interval.
    \item The gaps between the intervals of level $k$ are at least of size $\e_k$, with $0<\e_{k+1}<\e_k$.
\end{enumerate}
Let $E=\bigcap_k E_k$. Define, for a concave dimension
function $h\in\H$, the quantity
\begin{equation*}
D^h_k:=m_1\cdot m_2\cdots m_{k-1}h(\e_km_k).
\end{equation*}
If $\varliminf_k D^h_k>0$, then $\HH{h}(E)>0$.
\end{lemma}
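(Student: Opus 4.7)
The natural route is the $h$-dimensional mass distribution principle (Lemma~4.3 in the excerpt), so the plan is to build a probability measure $\mu$ supported on $E=\bigcap_k E_k$ and to bound $\mu(U)\lesssim h(\operatorname{diam} U)$ uniformly in $U$ of small diameter. The measure is constructed level by level in the standard inverse-limit way: assign $\mu(E_0)=1$, and if $I$ is a level-$(k-1)$ interval with $\mu$-mass already defined, split the mass evenly among its $n_k(I)\ge m_k$ children. Since at each stage one divides by a factor at least $m_k$, every level-$k$ interval $I^k_j$ receives mass at most $(m_1m_2\cdots m_k)^{-1}$. Passing to the weak limit (the intervals of $E_k$ are closed and nested) gives a Borel probability measure $\mu$ supported on $E$.

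The key estimates come from the two geometric constraints. Fix $U\subset\R$ with $d=\operatorname{diam}(U)$ small, and choose the unique $k$ with $\varepsilon_{k+1}\le d<\varepsilon_k$. Because consecutive level-$k$ intervals are separated by gaps of length at least $\varepsilon_k>d$, the set $U$ meets at most one interval of level $k$, whence
\begin{equation*}
\mu(U)\le \frac{1}{m_1\cdots m_k}.
\end{equation*}
On the other hand, at level $k+1$ the intervals are separated by gaps $\ge \varepsilon_{k+1}$, so $U$ can meet at most $d/\varepsilon_{k+1}+1\le 2d/\varepsilon_{k+1}$ of them, giving
\begin{equation*}
\mu(U)\le \frac{2d}{\varepsilon_{k+1}\,m_1\cdots m_{k+1}}.
\end{equation*}
Taking the minimum of the two bounds,
\begin{equation*}
\mu(U)\le \frac{1}{m_1\cdots m_{k+1}\,\varepsilon_{k+1}}\,\min\{\varepsilon_{k+1}m_{k+1},\,2d\}.
\end{equation*}

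The passage from this $\min$ expression to $h(d)$ is exactly the role of Lemma~4.2 (the concavity lemma) applied with $a=\varepsilon_{k+1}m_{k+1}$ and $b=d$: using that $\min\{a,2d\}\le 2\min\{a,d\}$ and the concavity inequality, we get $\min\{\varepsilon_{k+1}m_{k+1},2d\}\le 2\,\varepsilon_{k+1}m_{k+1}\,h(d)/h(\varepsilon_{k+1}m_{k+1})$, hence
\begin{equation*}
\mu(U)\le \frac{2\,h(d)}{m_1\cdots m_k\,h(\varepsilon_{k+1}m_{k+1})}=\frac{2\,h(d)}{D^h_{k+1}}.
\end{equation*}
By hypothesis $\liminf_k D^h_k>0$, so there is a constant $c$ with $1/D^h_{k+1}\le c$ for all sufficiently large $k$, and thus $\mu(U)\le 2c\,h(d)$ for all $U$ of diameter below some threshold $\varepsilon$. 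The $h$-dimensional mass distribution principle then yields $\HH{h}(E)\ge \mu(E)/(2c)>0$, finishing the proof.

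The only delicate point is the interplay of the two bounds through the concavity lemma; once one sees that the $\min$ structure is precisely what Lemma~4.2 is designed for, everything else is routine bookkeeping. A minor care must be taken with the case $n_k(I)>m_k$ (we still distribute evenly, which only decreases each child's mass and therefore strengthens the bound), and with ensuring $d$ is small enough that a level $k$ with $\varepsilon_{k+1}\le d<\varepsilon_k$ exists, which is guaranteed by $\varepsilon_k\downarrow 0$.
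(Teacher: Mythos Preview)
Your proof is correct and follows essentially the same route as the paper's: construct a mass distribution giving each level-$k$ interval mass at most $(m_1\cdots m_k)^{-1}$, bound $\mu(U)$ by the minimum of a ``one parent'' estimate and a ``count the gaps'' estimate, convert the $\min$ into $h(\operatorname{diam} U)$ via the concavity lemma, and finish with the mass distribution principle. The only cosmetic differences are an index shift (you pick $k$ with $\varepsilon_{k+1}\le d<\varepsilon_k$ and land on $D^h_{k+1}$, while the paper picks $\varepsilon_k<|U|<\varepsilon_{k-1}$ and lands on $D^h_k$) and your handling of the factor $2$ via $\min\{a,2d\}\le 2\min\{a,d\}$ rather than the paper's use of the doubling property $h(2|U|)\lesssim h(|U|)$.
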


\begin{proof}
The idea is to use the version of the mass distribution principle
from Lemma \ref{lem:massh}. Clearly we can assume that the
property \eqref{eq:mk} of Lemma \ref{lem:cantorgeneral} holds
for \emph{exactly} $m_k$ intervals. So we can define a mass
distribution on $E$ assigning a mass of $\frac{1}{m_1\cdots m_k}$
to each of the  $m_1\cdots m_k$ intervals of level $k$. Now, for
any interval $U$ with $0<|U|<\e_1$, take $k$ such that
$\e_k<|U|<\e_{k-1}$. We will estimate the number of intervals of
level $k$ that could have non-empty intersection with $U$. For
that, we note the following:
\begin{itemize}
    \item $U$ intersects at most one $I^{k-1}_{j}$, since $|U|<\e_{k-1}$. Therefore it can intersect at most $m_k$ children of $I^{k-1}_{j}$.
    \item Suppose now that $U$ intersects $L$ intervals of level $k$. Then it must contain $(L-1)$ gaps of size at least $\e_k$. Therefore, $L-1\le\frac{|U|}{\e_k}$. Consequently $|U|$ intersects at most $\frac{|U|}{\e_k}+1\le 2\frac{|U|}{\e_k}$ intervals of level $k$.
\end{itemize}
From these two observations, we conclude that
\begin{equation*}
\mu(U)\le\frac{1}{m_1\cdots m_k}\min\left\{m_k,
\frac{2|U|}{\e_k}\right\}=\frac{1}{m_1\cdots
m_k\e_k}\min\{\e_km_k,2|U|\}.
\end{equation*}
Now, by the concavity of $h$, we obtain
\begin{equation*}
 \min\{\e_km_k,2|U|\}\le \frac{\e_km_k}{h(\e_km_k)}h(2|U|).
\end{equation*}
In addition (also by concavity), $h$ is doubling, so
$h(2|U|)\lesssim h(|U|)$ and then
\begin{equation*}
\mu(U)\lesssim\frac{\e_km_kh(|U|)}{m_1\cdots
m_k\e_kh(\e_km_k)}=\frac{h(|U|)}{m_1\cdots
m_{k-1}h(\e_km_k)}=\frac{h(|U|)}{D^h_k}.
\end{equation*}
Finally, if $\varliminf_k D_k^h>0$, there exists $k_0$ such
$\frac{1}{D_k^h}\le C$ for $k\ge k_0$ and we can use the mass
distribution principle with $C$ and $\e=\e_{k_0}$.
\end{proof}

\begin{remark}
In the particular case of $h(x)=x^s$, $s\in(0,1)$ we recover the
result of \cite{fal03}, where the parameter $s$ can be expressed
in terms of the sequences $m_k$ and $\e_k$. For the set
constructed in Lemma \ref{lem:cantorgeneral}, we have
\begin{equation}\label{eq:classicgeneral}
 \dim_H(E)\ge \frac{\log(m_1\cdots m_{k-1})}{-\log(m_k\e_k)}.
\end{equation}
\end{remark}

\subsection{Diophantine approximation - Jarn\'ik's Theorem}\label{sec:dioph}

The central problem in the theory of Diophantine approximation is, at its simplest level, to approximate irrational numbers by rationals. A classical theorem due to Jarn\'ik in this area is the following (see \cite{fal86}), which provides a result on the size of the set of real numbers that are well approximable. As usual, $d(x,\mathbb Z)$ denotes the distance from $x$ to the nearest integer.
\begin{theorem}\label{thm:jarnikB}
For $\beta\ge2$, define the following set:
\[
B_\beta=\left\{x\in[0,1]\setminus \Q:
d(xq,\mathbb Z)<\frac{1}{q^{\beta-1}}\text{ for infinitely many }
q\in\Z\right\}.
\]
Then $\dim_H(B_\beta)=\frac{2}{\beta}$.
\end{theorem}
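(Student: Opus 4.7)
The plan is to establish $\dim_H(B_\beta) \leq 2/\beta$ and $\dim_H(B_\beta) \geq 2/\beta$ separately.

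For the upper bound I will use a direct covering argument. Since $x \in B_\beta$ iff $|x - p/q| < q^{-\beta}$ for infinitely many pairs $(p,q)$ with $q \geq 1$, $0 \leq p \leq q$, for every $Q \geq 1$ we have
\begin{equation*}
B_\beta \subseteq \bigcup_{q \geq Q} \bigcup_{p=0}^{q} \left(\tfrac{p}{q} - q^{-\beta},\ \tfrac{p}{q} + q^{-\beta}\right).
\end{equation*}
Hence for any $s > 2/\beta$, taking $\delta = 2 Q^{-\beta}$,
\begin{equation*}
\hh{s}(B_\beta) \lesssim \sum_{q \geq Q}(q+1)(2 q^{-\beta})^s \lesssim \sum_{q \geq Q} q^{1 - s\beta}.
\end{equation*}
Since $1 - s\beta < -1$, this tail sum vanishes as $Q \to \infty$, giving $\HH{s}(B_\beta) = 0$ and hence $\dim_H(B_\beta) \leq 2/\beta$.

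For the lower bound I will construct a Cantor-type subset $E \subseteq B_\beta$ to which \prettyref{lem:cantorgeneral} applies. Fix $s < 2/\beta$ and a rapidly increasing sequence of integers $\{n_k\}$ (to be specified). At level $k$ consider the closed intervals $[p/q - n_k^{-\beta},\ p/q + n_k^{-\beta}]$ for all reduced fractions $p/q \in [0,1]$ with $n_k \leq q \leq 2 n_k$, and retain those contained in some level $k-1$ interval. By design, any $x$ in the Cantor intersection lies within $q_k^{-\beta}$ of some $p_k/q_k$ with $q_k \to \infty$, so $x \in B_\beta$.

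To invoke the lemma I need the child count $m_k$ and the minimum gap $\e_k$. A Farey-type equidistribution estimate shows that the number of reduced $p/q$ with $n_k \leq q \leq 2n_k$ lying in a parent interval of length $\sim n_{k-1}^{-\beta}$ is $\sim n_k^2 / n_{k-1}^\beta$, yielding $m_k \sim n_k^2 / n_{k-1}^\beta$. Any two distinct reduced fractions with denominators $\leq 2 n_k$ differ by at least $1/(4n_k^2)$; since the half-widths $n_k^{-\beta}$ are much smaller, the retained intervals are pairwise disjoint with gap $\e_k \sim n_k^{-2}$, so $m_k \e_k \sim n_{k-1}^{-\beta}$. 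A telescoping computation yields
\begin{equation*}
\log(m_1 \cdots m_{k-1}) = 2 \log n_{k-1} - (\beta - 2) \sum_{j=1}^{k-2} \log n_j.
\end{equation*}
Choosing $n_k$ super-exponentially (e.g.\ $n_k = n_{k-1}^C$ with $C$ large) makes $\sum_{j<k-1}\log n_j = O(\log n_{k-1}/C)$, and the remark after \prettyref{lem:cantorgeneral} then produces
\begin{equation*}
\dim_H E \geq \varliminf_k \frac{\log(m_1 \cdots m_{k-1})}{-\log(m_k \e_k)} \geq \frac{2}{\beta} - \frac{\beta - 2}{\beta(C - 1)}.
\end{equation*}
Letting $C \to \infty$ and then $s \to 2/\beta$ concludes the proof.

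The main technical obstacle is the lower bound: a naive Cantor construction using a single denominator $q_k$ per level only attains $\dim \geq 1/\beta$. Reaching the sharp value $2/\beta$ requires enriching each level with the full Farey range $q \in [n_k, 2n_k]$, whose $\sim n_k^2$ reduced fractions supply the extra factor of $n_k$ in $m_k$. Verifying the Farey equidistribution count inside a generic parent interval (using that reduced fractions with $q \leq 2n_k$ are asymptotically equidistributed in $[0,1]$ with density $\sim 3 n_k^2/\pi^2$) and handling the few boundary fractions whose intervals would straddle a parent boundary are the delicate points, though both are standard.
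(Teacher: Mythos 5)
The paper itself does not prove \prettyref{thm:jarnikB}: it is quoted as a classical theorem of Jarn\'ik, with the reader referred to \cite{jar31}, \cite{bes34}, \cite{egg52}, \cite{fal86}, \cite{fal03}. So there is no in-paper proof to compare line by line; the closest relative is \prettyref{pro:Bglow}, where the authors prove a generalized lower bound for $B_\g$ by precisely the kind of Cantor construction you use, fed into \prettyref{lem:cantorgeneral}. The structural difference lies in how the level-$k$ children are produced: the paper keeps only \emph{prime} denominators $p\in[n_k,2n_k]$, so the fractions $r/p$ are automatically reduced and distinct across denominators, paying a harmless $\log n_k$ through the Prime Number Theorem; you take all reduced fractions with denominator in the dyadic block $[n_k,2n_k]$ and invoke equidistribution of Farey fractions in short intervals. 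Both routes give $m_k\gtrsim n_k^2|I|$ (up to logarithms) for a parent interval $I$, and the same separation $\varepsilon_k\sim n_k^{-2}$ coming from $|p_1/q_1-p_2/q_2|\ge 1/(q_1q_2)$, hence the same dimension count; your version avoids the PNT at the cost of a uniform Farey counting estimate whose error term $O(n_k\log n_k)$ per parent must be checked against the main term $n_k^2 n_{k-1}^{-\beta}$, which is fine for $n_k=n_{k-1}^C$ with $C>\beta$. Your upper bound is the standard covering argument and is correct.

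Three small repairs are needed, none fatal. First, with half-width $n_k^{-\beta}$ a point $x$ of your Cantor set only satisfies $|x-p/q|\le n_k^{-\beta}\le 2^{\beta}q^{-\beta}$ for the relevant $q\in[n_k,2n_k]$, which is not literally the defining inequality of $B_\beta$; take half-width $(2n_k)^{-\beta}$ instead, which changes nothing in the counts or gaps. Second, the claim that the half-widths are much smaller than the center separation $1/(4n_k^2)$ fails at $\beta=2$ (there $n_k^{-\beta}=n_k^{-2}$ and the intervals may overlap); but for $\beta=2$ the theorem is trivial, since by Dirichlet every irrational lies in $B_2$, so treat $\beta=2$ separately and run the construction only for $\beta>2$. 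Third, $B_\beta$ excludes rationals, so pass to $E\setminus\Q$, which has the same Hausdorff dimension since only countably many points are removed. With these adjustments your computation $\dim_H E\ge \frac{2}{\beta}-\frac{\beta-2}{\beta(C-1)}$ stands, and letting $C\to\infty$ gives the lower bound; note that the parameter $s<2/\beta$ you fix at the start of the lower-bound argument is never actually used and can be dropped.
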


In fact, there are several results (see \cite{khi24}, \cite{jar31}) regarding the more general problem of estimating the size of the set
\begin{equation}\label{eq:Bg}
B_\g:=\left\{x\in[0,1]\setminus \Q:
d(xq,\mathbb Z)\le\frac{q}{\g(q)}\text{ for infinitely many }
q\in\N\right\},
\end{equation} 
where $\g$ is any positive increasing function.

We can therefore see Lemma \ref{lem:fiber} as a result on the size of a set of well approximable numbers. We will derive the proof of Lemma \ref{lem:fiber} from the following proposition, where we prove a lower bound estimate for the set $B_\g$. The original idea is from \cite{fal03}, Theorem 10.3, where the result for the classical case in the form presented above in Theorem \ref{thm:jarnikB} is proved. Precisely, for $\h(x)=\frac{1}{\left(\g^{-1}(\frac{1}{x})\right)^2}$ we will find conditions on $\frac{\h}{h}$ with $h\prec \h$ to ensure that $\HH{h}(B_\g)>0$. To simplify the notation of the proof, let $\Delta(h,\h)(x)=\frac{\h(x)}{h(x)}$ and recall that $\H_d$ is the set of all dimension functions that satisfy a doubling condition.

\begin{proposition}\label{pro:Bglow}
Let $\g$ be a positive, increasing function satisfying 
\begin{equation}\label{eq:fastg} 
\g(x)\gg x^2 \quad  (x\gg 1)
\end{equation}
and 
\begin{equation}\label{eq:logg-1}
\g^{-1}(ab)\lesssim \g^{-1}(a)+\g^{-1}(b) \text{ for all }
a,b\ge1.
\end{equation}
Define $B_\g$ as in \eqref{eq:Bg} and let $h\in\H_d$ be a concave dimension function
such that $h\prec\h(x)=\frac{1}{\left(\g^{-1}(\frac{1}{x})\right)^2}$. Consider a sequence $\{n_k\}$ that satisfies:
\begin{itemize}
\item[(A)] $n_{k}\ge 3\g(2n_{k-1})$.
\item[(B)] $\log(n_k)\le \g(n_{k-1})$.
\end{itemize}
If
$\Delta(h,\h)(x)=\frac{\h(x)}{h(x)}=\frac{1}{h(x)\g^{-1}(\frac{1}{x})^2}$
satisfies
\begin{equation}\label{eq:critlow}
\varliminf_k\frac{1}{6^k\g^{2}(2n_{k-2})\Delta(h,\h)\left(\frac{1}{\log(n_{k})\g(2n_{k-1})}\right)}>0,
\end{equation}
then $\HH{h}(B_\g)>0$.
\end{proposition}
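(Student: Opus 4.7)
The plan is to construct a Cantor-type subset $E \sub B_\g$ in the Jarn\'ik spirit and then deduce $\HH{h}(E)>0$ from the generalized mass distribution principle of \prettyref{lem:cantorgeneral}. The function $\h(x)=1/(\g^{-1}(1/x))^2$ is the ``natural'' dimension function for $B_\g$: an approximation window of radius $1/\g(q)$ has $\h$-value $\sim 1/q^2$, exactly matching the density $\sim q^2$ of irreducible fractions with denominator $\le q$. The proposition asserts that the decay condition \prettyref{eq:critlow} on $\Delta(h,\h)$ is precisely what is needed to promote positivity of the $\h$-measure of $B_\g$ to positivity of its $h$-measure for the finer $h\prec \h$.

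The Cantor scheme is the standard one. Starting from $E_0=[0,1]$, given $E_{k-1}$ as a finite union of closed intervals centered on rationals, form $E_k$ by collecting, inside each parent interval, all irreducible rationals $p/q$ with $q$ in a range $J_k$ and surrounding each by a closed interval $I^{(k)}_{p,q}$ of radius $r_k\le 1/\g(q)$. Choose $J_k$ and $r_k$ so that (i) the $I^{(k)}_{p,q}$ are disjoint with gap at least $\e_k$ and (ii) the number of children $m_k$ inside each parent satisfies $\e_k m_k \sim 1/(\log n_k\, \g(n_{k-1}))$. Condition (A), combined with the growth \prettyref{eq:fastg}, makes $r_k$ negligible compared with the Farey separation $1/(qq')\gtrsim 1/n_{k-1}^2$, so the children really are disjoint. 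Any $x\in E:=\bigcap_k E_k$ is approximated at every stage by some $p_k/q_k$ with $q_k\to\infty$ and $|x-p_k/q_k|\le 1/\g(q_k)$, so $E\sub B_\g$.

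To finish, apply \prettyref{lem:cantorgeneral} (replacing $h$ by an equivalent concave representative in the $\equiv$-class if necessary). Writing $h=\h/\Delta(h,\h)$ gives
\[
D^h_k \;=\; m_1\cdots m_{k-1}\, h(\e_k m_k)\;=\;\frac{m_1\cdots m_{k-1}\,\h(\e_k m_k)}{\Delta(h,\h)(\e_k m_k)}.
\]
Condition (B), $\log n_k\le \g(n_{k-1})$, together with the sublinearity \prettyref{eq:logg-1} yields $\g^{-1}(\log n_k\,\g(n_{k-1}))\lesssim n_{k-1}$, hence $\h(\e_k m_k)\gtrsim 1/n_{k-1}^2$. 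A bookkeeping of the products $m_j$, $1\le j\le k-1$ (each stage contributing a quadratic Farey factor whose net effect telescopes to $\g^2(n_{k-2})$, with the multiplicative slack absorbed into $6^k$) shows that $m_1\cdots m_{k-1}\,\h(\e_k m_k)$ is comparable to $1/(6^k\g^2(n_{k-2}))$. The condition $\varliminf_k D^h_k>0$ thus reduces exactly to the hypothesis \prettyref{eq:critlow}, and \prettyref{lem:cantorgeneral} then gives $\HH{h}(E)>0$, whence $\HH{h}(B_\g)>0$.

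The main obstacle is the delicate calibration of $J_k$ and $r_k$ so that the telescoping product $m_1\cdots m_{k-1}\,\h(\e_k m_k)$ not only stays bounded below but actually matches the factor $1/(6^k\g^2(n_{k-2}))$ prescribed by \prettyref{eq:critlow}; the bound $\h(\e_k m_k)\gtrsim 1/n_{k-1}^2$ alone is too crude. Conditions (A) and (B) play complementary roles in this calibration: (A) provides enough room inside each parent for $\sim n_{k-1}^2/\g(n_{k-2})$ disjoint children, while (B) keeps the argument $1/(\log n_k\g(n_{k-1}))$ of $\Delta(h,\h)$ inside the regime in which $\g^{-1}$ is controlled by $n_{k-1}$ rather than by $\g^{-1}(\log n_k)$ blowing up.
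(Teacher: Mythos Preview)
Your overall strategy---build a Jarn\'ik-type Cantor set $E\sub B_\g$ and feed it into \prettyref{lem:cantorgeneral}---is exactly the paper's. What is missing is the concrete device that makes the child count and the separation come out as you claim, and there is an index slip in your separation estimate.

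The paper does \emph{not} use ``irreducible rationals $p/q$ with $q$ in a range $J_k$'' together with a Farey argument. At stage $k$ it takes as centers the fractions $r/p$ with $p$ a \emph{prime} in $[n_k,2n_k]$. For distinct primes $p_1,p_2$ in that range one has $|r_1/p_1-r_2/p_2|\ge 1/(p_1p_2)\ge 1/(4n_k^2)$, so $\e_k=1/(8n_k^2)$. By the Prime Number Theorem there are $\gtrsim n_k/\log n_k$ such primes, and each prime $p$ contributes at least $p|I|/3$ centers to a parent interval $I$ (of length $|I|\ge 2/\g(2n_{k-1})\ge 3/n_k$ by condition (A)). This yields $m_k\gtrsim n_k^2/(6\log(n_k)\g(2n_{k-1}))$ and hence $\e_k m_k\sim 1/(\log n_k\,\g(n_{k-1}))$. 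The $\log n_k$ you wrote down is thus the PNT factor; it does not come from a Farey count. Your alternative of counting irreducible fractions in a range could perhaps be made to work, but it needs a uniform lower bound for the number of such fractions in a short interval, which you have not supplied; without it the ``delicate calibration'' you yourself flag as the main obstacle is genuinely open in your sketch.

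Two smaller points. First, your separation bound $1/(qq')\gtrsim 1/n_{k-1}^2$ has the wrong scale: the level-$k$ denominators are of size $\sim n_k$, so the gaps are $\gtrsim 1/n_k^2$ (this is $\e_k$). Second, condition (A) is not what makes the children disjoint---that is purely $\g(x)\gg x^2$ from \prettyref{eq:fastg}. Condition (A) is used to guarantee that each parent interval, of length $\sim 1/\g(2n_{k-1})$, is long enough (namely $\ge 3/n_k$) to contain the required $m_k$ children. Once these ingredients are in place, the telescoping of $m_1\cdots m_{k-1}h(\e_k m_k)$ proceeds exactly as you outline, using $n_j\ge\log n_j$, $n_j\ge\g(n_{j-1})$ (from (A)) to cancel all but the last pair of factors, then (B) and \prettyref{eq:logg-1} to reduce to \prettyref{eq:critlow}.
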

\begin{proof}
Define
\begin{equation}
 G_q:=\left\{x\in [0,1]\setminus \Q: d(xq,\mathbb Z)\le \frac{q}{\g(q)}\right\}.
\end{equation}
For each $q\in\N$, $G_q$ is the union of $q-1$ intervals (with no rational numbers) of
length $2\g(q)^{-1}$ and two more intervals of length
$\g(q)^{-1}$ at the endpoints of $[0,1]$.
Now define for each $q$
\[
G'_q:=G_q\cap \left(\frac{1}{\g(q)},1-\frac{1}{\g(q)}\right).
\]

Now, for each $n\in\N$ consider two prime numbers $p_1, p_2$ such
that $n\le p_1<p_2<2n$ (these can always be chosen for large $n$, see \eqref{eq:PrimeNumberTheorem}). We will prove that $G'_{p_1}$ and
$G'_{p_2}$ are disjoint and well separated. Note that if
$\frac{r_1}{p_1}$ and $\frac{r_2}{p_2}$ are centers of two of the
intervals belonging to $G'_{p_1}$ and $G'_{p_2}$, we have
\[
 \left|\frac{r_1}{p_1}-\frac{r_2}{p_2}\right|=\frac{1}{p_1p_2}|r_1p_2-r_2p_1|\ge\frac{1}{4n^2},
\]
since $r_1p_2-r_2p_1\neq 0$. Therefore, taking into account this
separation between the centers and the length of the intervals,
we conclude that for $x\in G'_{p_1}$ and $y\in G'_{p_2}$,
\[
 |x-y|\ge \frac{1}{4n^2}-\frac{2}{\g(n)}\ge \frac{1}{8n^2}\qquad(\text{ since }\g(n)\gg n^2).
\]
Let $\mathcal{P}_m^n$ be the set of all the prime numbers between
$m$ and $n$ and define
\[
    H_n:=\bigcup_{p\in\mathcal{P}_n^{2n}}G'_p.
\]
Then $H_n$ is the union of intervals of length at least
$\frac{2}{\g(2n)}$ that are separated by a distance of at least
$\frac{1}{8n^2}$.

Now we observe the following: If $n\in\mathbb N$, $n\le p\le 2n$ and $I$ is an interval with
$|I|>\frac{3}{n}$, then at least $\frac{p|I|}{3}$ of the
intervals of $G'_p$ are completely contained on $I$. To verify
this last statement, cut $I$ into three consecutive and congruent
subintervals. Then, in the middle interval there are at least
$\frac{p|I|}{3}$ points of the form $\frac{m}{p}$. All the
intervals of $G'_p$ centered at these points are completely
contained in $I$, since the length of each interval of $G'_p$ is
$\frac{2}{\g(p)}<\frac{|I|}{3}$.

In addition, by the Prime Number Theorem, we know that
$\#(\mathcal{P}_1^n)\sim\frac{n}{\log(n)}$, so we can find $n_0$
such that
\begin{equation}\label{eq:PrimeNumberTheorem}
\#(\mathcal{P}_n^{2n}) \ge \frac{n}{2\log(n)} \text{ for } n\ge
n_0.
\end{equation}

Hence, if $I$ is an interval with $|I|>\frac{3}{n}$, then there
are at least
\[
\frac{p|I|}{3}\frac{n}{2\log(n)}>\frac{n^2|I|}{6\log(n)}
\]
intervals of $H_n$ contained on $I$. Now we will construct a
Cantor-type subset $E$ of $B_\g$ and apply
Lemma \ref{lem:cantorgeneral}.

Consider the sequence $\{n_k\}$ of the hypothesis of the
proposition and let $E_0=[0,1]$. Define $E_{k}$ as the union of
all the intervals of $H_{n_{k}}$ contained in $E_{k-1}$. Then
$E_k$ is built up of intervals of length at least
$\frac{1}{\g(2n_k)}$ and separated by at least
$\e_k=\frac{1}{8n_k^2}$. Moreover, since
$\frac{1}{\g(2n_{k-1})}\ge \frac{3}{n_k}$, each interval of
$E_{k-1}$ contains at least
\[
m_k:=\frac{n_k^2}{6\log(n_k)\g(2n_{k-1})}
\]
intervals of $E_k$.

Now we can apply Lemma \ref{lem:cantorgeneral} to $E=\bigcap E_k$. Recall that $\e_k$ denotes the separation between $k$-level intervals and $m_k$ denotes the number of children of each of them. Consider $h\prec\h$, $h\in \H_d$, and recall the notation $\Delta(h,\h)(x)=\frac{\h(x)}{h(x)}$. Then
\begin{eqnarray*}
 D^h_k & = & m_1\cdot m_2\cdots m_{k-1}h(\e_km_k)\\
    & = & \frac{6^{-(k-2)}n^2_2\cdots n^2_{k-1}}{\log(n_2)\cdots\log(n_{k-1})\g(2n_1)\cdots\g(2n_{k-2})}h\left(\frac{1}{48\log(n_k)\g(2n_{k-1})}\right).\\
\end{eqnarray*}
Now we note that $n_k\ge \log(n_k)$ and, by hypothesis $(A)$, we
also have that  $n_k\ge\g(2n_{k-1})$. In addition,
$h$ is doubling, therefore it follows that we can bound the first
factor to obtain that
\begin{equation*}
    D^h_k \gtrsim \frac{6^{-k}n^2_{k-1}}{\g^{2}(2n_{k-2})\Delta(h,\h)\left(\frac{1}{\log(n_k)\g(2n_{k-1})}\right){\left(\g^{-1}(\log(n_k))+2n_{k-1}\right)^2}},
\end{equation*}
since, by hypothesis $(B)$, $n_k$ satisfies $\log(n_{k-1})\le
\g(n_{k-2})$ and $\g$ satisfies \eqref{eq:logg-1}. Now, again
by hypothesis $(B)$,
\begin{equation*}
D^h_k \ge
\frac{1}{6^k\g^{2}(2n_{k-2})}\frac{1}{\Delta(h,\h)\left(\frac{1}{\log(n_{k})\g(2n_{k-1})}\right)}.
\end{equation*}
Thus, if
\[
\varliminf_k\frac{1}{6^k\g^{2}(2n_{k-2})}\frac{1}{\Delta(h,\h)\left(\frac{1}{\log(n_{k})\g(2n_{k-1})}\right)}
>0,
\]
then $\HH{h}(E)>0$ and therefore $\HH{h}(B_\g)>0$.
\end{proof}

The following example not only illustrates this last result, but also will be crucial in the proof of Lemma \ref{lem:fiber}.
\begin{example}\label{ex:loglow}
Define $\g_r(x)=e^{x^\frac{2}{r}}$ for $r>0$ and consider the set $B_{\g_r}$.  Then $\h_r(x)=\frac{1}{\log^r(\frac{1}{x})}$ will be an expected lower bound for the dimension function for the set $B_{\g_r}$. Consider the family $h_\theta(x)=\frac{1}{\log^\theta(\frac{1}{x})}$ ($0<\theta<r$), which satisfy $h_\theta\prec \h_r$. In this context, $\frac{\h_r(x)}{h_\theta(x)} = \log^{\theta-r}(\frac{1}{x})$. Define the sequence $n_k$ as follows:
\[
    n_k=e^{kn^\frac{2}{r}_{k-1}}.
\]
Clearly the sequence is admissible, since
\begin{itemize}
\item[(A)] $n_{k}\ge 3\g_r(2n_{k-1})$, and
\item[(B)] $\log(n_k)\le \g_r(n_{k-1})$.
\end{itemize}
Then, for the quantity $D_k^h$ defined in Proposition \ref{pro:Bglow}, we have the following estimate: 
\begin{equation*}
D_k^h  \gtrsim
\frac{(\log\log(n_k)+n^\frac{2}{r}_{k-1})^{r-\theta}}{6^ke^{Mn^\frac{2}{r}_{k-2}}}
\ge
\frac{n_{k-1}^{2\frac{r-\theta}{r}}}{6^ke^{Mn^\frac{2}{r}_{k-2}}}, \qquad M=2^{\frac{2}{r}+1}.
\end{equation*}
Finally, for any $\e>0$ and  $M>0$, $n_k$ satisfies, for large
$k$,
\[
    \frac{n^\e_{k-1}}{6^ke^{Mn^\frac{2}{r}_{k-2}}}=
    \frac{e^{\e{kn_{k-2}^\frac{2}{r}}}}{6^ke^{Mn_{k-2}^\frac{2}{r}}}
    =\frac{e^{(\e k -M)n_{k-2}^\frac{2}{r}}}{6^k}\ge 1,
\]
so we conclude that $\varliminf_k D_k^h>0$. Therefore the set $E\sub B_{\g_r}$ constructed in the proof above satisfies $\HH{h_\theta}(E)>0$ for all $\theta<r$.
\end{example}

\subsection{Another Jarn\'ik type Theorem and proof of Lemma \ref{lem:fiber}}

For the proof of Lemma \ref{lem:fiber} we will need a different but essentially equivalent formulation of Jarn\'ik's theorem. We first recall that in the proof of Theorem \ref{thm:logFalpha} we  used the following theorem.
\begin{theorem}\label{thm:jarnikA}
 Let $\n=\{n_j\}_j$ be an increasing sequence with $n_{j+1}\ge n_j^j$ for all $j\in\N$. For $0<\alpha\le1$, if $A^\n_\alpha$ is defined as
\[
A^\n_\alpha=\left\{x\in[0,1]\setminus \Q: \forall j\ \exists\
p,q\ ; q\le n_j^\alpha; |x-\frac{p}{q}|<\frac{1}{n_j^2}\right\},
\]
then $\dim_H(A^\n_\alpha)=\alpha$.
\end{theorem}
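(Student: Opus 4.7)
The statement is a classical Jarn\'ik-type dimension identity; I would prove the two inequalities $\dim_H(A^\n_\alpha)\le\alpha$ and $\dim_H(A^\n_\alpha)\ge\alpha$ separately. The upper bound is the easy direction. Directly from the definition, for every $j$ we have the natural cover
\[
A^\n_\alpha\subseteq \bigcup_{q\le n_j^\alpha}\bigcup_{0\le p\le q}\left(\frac{p}{q}-\frac{1}{n_j^2},\frac{p}{q}+\frac{1}{n_j^2}\right).
\]
The number of centers is bounded by $\sum_{q\le n_j^\alpha}(q+1)\lesssim n_j^{2\alpha}$, and each interval has diameter $2/n_j^2$. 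For any $s>\alpha$, the $s$-th power sum is $\lesssim n_j^{2\alpha-2s}\to 0$ as $j\to\infty$, so $\HH{s}(A^\n_\alpha)=0$, giving $\dim_H(A^\n_\alpha)\le\alpha$.

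For the lower bound, the plan is to construct a Cantor-type subset $E\subseteq A^\n_\alpha$ and apply the machinery of Section~\ref{sec:cantor}, in particular \prettyref{lem:cantorgeneral} with $h(x)=x^s$ for arbitrary $s<\alpha$. At each stage $j$ I would select rationals $p/q$ whose denominators $q$ are primes lying in a window of the form $[n_j^\alpha/2,n_j^\alpha]$, exactly as in the proof of \prettyref{pro:Bglow}. This has two benefits: the prime trick $|p_1/q_1-p_2/q_2|\ge 1/(q_1q_2)\ge 1/n_j^{2\alpha}$ forces the centers to be well separated, and the Prime Number Theorem guarantees roughly $n_j^\alpha/\log n_j$ admissible denominators, hence about $n_j^{2\alpha}/\log n_j$ centers in $[0,1]$, each giving a level-$j$ interval of half-width $1/n_j^2$.

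Inductively, given a level-$(j-1)$ interval $I$ of length at least $\ell_{j-1}\gtrsim 1/n_{j-1}^2$, I count how many level-$j$ intervals fit inside its middle third (as in \prettyref{pro:Bglow}): the number is $\gtrsim \ell_{j-1}\, n_j^{2\alpha}/\log n_j$. Using the hypothesis $n_{j+1}\ge n_j^j$, which guarantees $n_j^2\gg n_{j-1}^{2\alpha}\log n_{j-1}$ for $j$ large, I can verify that (i) level-$j$ intervals are much shorter than $\ell_{j-1}$ so they do fit inside parents, and (ii) the children are separated by $\varepsilon_j\gtrsim 1/n_j^{2\alpha}$. Plugging $m_j$ and $\varepsilon_j$ into the Cantor criterion, the relevant quantity $D^{h}_k=m_1\cdots m_{k-1}(\varepsilon_k m_k)^s$ telescopes into a product whose dominant factor is $\prod_{j<k}n_j^{2\alpha-2s}/\log n_j$, which stays bounded away from zero for every $s<\alpha$ because the exponent $2\alpha-2s$ is positive and the logarithmic losses are absorbed by the super-exponential growth of $\{n_j\}$. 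By \prettyref{lem:massh}/\prettyref{lem:cantorgeneral} this yields $\HH{s}(E)>0$, hence $\dim_H(A^\n_\alpha)\ge\dim_H(E)\ge s$, and letting $s\nearrow\alpha$ finishes the argument.

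The only real technical obstacle is the bookkeeping in step (ii): one must simultaneously (a) pick the prime-denominator window so that the level-$j$ centers are uniformly separated, (b) bound the number of children from below in terms of the parent length $\ell_{j-1}$, and (c) ensure that the product defining $D^h_k$ does not decay. All three balance on the fast growth $n_{j+1}\ge n_j^j$, and each level of the recursion only needs a polynomial estimate in $n_j$, so the super-exponential growth provides ample slack. The argument $p/q\in A^\n_\alpha$ for every chosen center is automatic since at each level we approximate $x$ to within $1/n_j^2$ by a rational of denominator at most $n_j^\alpha$, which is precisely the defining condition of $A^\n_\alpha$.
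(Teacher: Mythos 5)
You are proposing a proof of a statement the paper itself does not prove: \prettyref{thm:jarnikA} is quoted as a classical Jarn\'ik--Besicovitch--Eggleston result, with the proof delegated to \cite{jar31}, \cite{bes34}, \cite{egg52}, \cite{fal86}, \cite{fal03}. Your plan does mirror the paper's own analogous machinery (\prettyref{lem:cantorgeneral} plus the prime-denominator construction of \prettyref{pro:Bglow}), and your upper bound is correct. The lower bound, however, has a genuine gap. With your choices one has $\e_k\approx n_k^{-2\alpha}$ and $m_k\approx \ell_{k-1}n_k^{2\alpha}/\log n_k\approx n_k^{2\alpha}/(n_{k-1}^{2}\log n_k)$, hence $\e_k m_k\approx 1/(n_{k-1}^{2}\log n_k)$ and (with $n_0:=1$)
\[
D^{h}_k \approx \Bigl(\prod_{j=1}^{k-1}\frac{n_j^{2\alpha}}{n_{j-1}^{2}\log n_j}\Bigr)\cdot\frac{1}{n_{k-1}^{2s}\,\log^{s} n_k},
\]
not the product $\prod_{j<k}n_j^{2\alpha-2s}/\log n_j$ you claim. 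The dangerous factor is $\log^{-s}n_k$, which involves the \emph{next} element of the sequence, while everything available to beat it involves only $n_1,\dots,n_{k-1}$. Since $n_{j+1}\ge n_j^{j}$ is only a \emph{lower} bound on growth, nothing bounds $\log n_k$ by a power of $n_{k-1}$: the admissible sequence $n_k=\exp(\exp(n_{k-1}))$ makes $D^h_k\to 0$ for every $s>0$, so the criterion of \prettyref{lem:cantorgeneral} gives nothing. Your statement that ``the logarithmic losses are absorbed by the super-exponential growth of $\{n_j\}$'' is exactly backwards: faster growth makes the loss worse. This is precisely why the paper, in \prettyref{pro:Bglow}, imposes the extra hypothesis (B) $\log(n_k)\le\g(n_{k-1})$ -- an \emph{upper} growth bound -- on a sequence it is free to choose; in \prettyref{thm:jarnikA} the sequence is given, so that move is unavailable.

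The gap is repairable, but not by invoking \prettyref{lem:cantorgeneral} with your $(m_k,\e_k)$: the lemma's bound $\min\{m_k,2|U|/\e_k\}$ uses the worst-case gap and overcounts by a factor $\log n_k$ at intermediate scales. Either estimate the number of level-$k$ centers in a test interval $U$ directly from the arithmetic (each prime $q$ in the window contributes at most $|U|q+1$ multiples of $1/q$, giving $\lesssim |U|n_k^{2\alpha}/\log n_k + n_k^{\alpha}/\log n_k$ centers, which recovers the missing $\log n_k$ when $|U|\gtrsim n_k^{-\alpha}$, while the separation bound suffices when $|U|\lesssim n_k^{-\alpha}$), or interpose an intermediate level in the Cantor scheme (first split each parent into blocks of length $\sim n_k^{-\alpha}$, then place the rational centers inside each block); with either modification the mass distribution principle yields $\HH{s}(E)>0$ for every $s<\alpha$ and all admissible $\n$. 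You should also tie up the smaller loose ends you gloss over: the first finitely many levels where $n_j^{\alpha}<n_{j-1}^{2}$ (keep a single child there, e.g.\ the concentric interval about the parent's center, so the defining condition holds for \emph{all} $j$, not just large $j$); the case $\alpha=1$, where the interval width $2/n_j^{2}$ is comparable to the separation $1/(q_1q_2)$ and the radii must be shrunk by a constant; and the removal of the countable set $E\cap\Q$ before concluding $E\setminus\Q\subseteq A^{\n}_{\alpha}$.
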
\index{Aa@$A^\n_\alpha$}

For the proofs of Theorem \ref{thm:jarnikA} and
Theorem \ref{thm:jarnikB}, we refer the reader to \cite{jar31},
\cite{bes34}, \cite{egg52}, \cite{fal86}, and \cite{fal03}.

Now we want to relate the sets $A^\n_\alpha$ and $B_\beta$ and their generalized versions. It is clear that for any $\alpha\in (0,1]$, we have the inclusion
$A^\n_\alpha\subset B_\frac{2}{\alpha}$. For $\alpha\in (0,1]$, if $x\in A^\n_\alpha$ then for each $j\in\N$ there exists a rational $\frac{p_j}{q_j}$ with $q_j\le n^\alpha_j$ such that $|x-\frac{p_j}{q_j}|<n_j^{-2}$, which is equivalent to $|xq_j-p_j|<q_jn_j^{-2}$. Therefore $|xq_j-p_j|\le q_j^{1-\frac{2}{\alpha}}$. Observe that if there were only finite values of $q$  for a given $x$, then $x$ has to be rational. For if $q_j=q_{j_0}$ for all $j\ge j_0$, then $|x-\frac{p_j}{q_{j_0}}|\to 0$ and this implies that $x\in \Q$. We conclude then that, for any $x\in A^\n_\alpha$, $d(xq,\mathbb Z)<\frac{1}{q^{\frac{2}{\alpha}-1}}$ for infinite many $q$ and therefore $x\in B_{\frac{2}{\alpha}}$. However, since the dimension of $A^\n_\alpha$ coincides with the one of
$B_\frac{2}{\alpha}$, one can expect that both sets have approximately comparable sizes.

We introduce the following  definition, which is the extended version of the definition of the  set $A^\n_\alpha$ in Theorem \ref{thm:jarnikA}.

\begin{definition}\label{def:jarnikh}
Let $\n=\{n_j\}_j$ be any increasing nonnegative sequence of
integers. Let $\f$ be an increasing function  defined on $\R_+$.
Define the set
\begin{equation*}
A^\n_\f:=\left\{x\in[0,1]\setminus \Q: \forall j\ \exists\ p,q\ ;
q\le \f(n_j); |x-\frac{p}{q}|<\frac{1}{n_j^2}\right\}.
\end{equation*} \index{Af@$A^\n_\alpha$}
\end{definition}
The preceding observation about the inclusion $A_\alpha\subset
B_\beta$ can be extended to this general setting. For a given
$\g$ as in the definition of $B_\g$, define
$\Gamma_\g(x)=\g^{-1}(x^2)$. Then the same calculations show that
$A^\n_{\Gamma_\g} \subset B_\g$.

We will need a converse relation between those sets, since we
want to prove a lower bound for the sets $A^\n_\f$ from the
estimates provided in Proposition \ref{pro:Bglow}.

\begin{lemma}\label{lem:EksubAf}
Let $\g$ and $B_\g$ be as in Proposition \ref{pro:Bglow}. Define
$\Gamma_\g(x)=\g^{-1}(x^2)$. Then, if $\m=\{m_k\}$ is the
sequence defining the set $E$ in the proof of
Proposition \ref{pro:Bglow}, then the set $E$ is contained in
$A^\n_{2\Gamma_\g}$, where $\n=\{n_k\}=\{\g(m_k)^\frac{1}{2}\}$.
\end{lemma}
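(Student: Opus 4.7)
The plan is to trace a point $x \in E = \bigcap_k E_k$ through the levels of the Cantor construction from the proof of \prettyref{pro:Bglow}, and to read off, at each level, the Diophantine approximation required for membership in $A^\n_{\Gamma_\g}$. With $n_k = \sqrt{\g(m_k)}$ one has $\Gamma_\g(n_k) = \g^{-1}(n_k^2) = m_k$ and $1/n_k^2 = 1/\g(m_k)$, so the conditions in \prettyref{def:jarnikh} become: for every $j$, produce integers $p, q$ with $q \lesssim m_j$ and $|x - p/q| \le 1/\g(m_j)$, and verify $x \notin \Q$.

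The key observation is a one-line read-off at each level. Since $x \in E_j \subseteq H_{m_j} = \bigcup_{p \in \mathcal{P}_{m_j}^{2m_j}} G'_p$, there is a prime $p_j$ with $m_j \le p_j < 2m_j$ such that $x \in G'_{p_j}$; by definition of $G_{p_j}$ this yields an integer $r_j$ with $|x - r_j/p_j| \le 1/\g(p_j)$. Since $\g$ is increasing and $p_j \ge m_j$, this bound upgrades to $|x - r_j/p_j| \le 1/\g(m_j) = 1/n_j^2$, which is exactly the approximation quality demanded by \prettyref{def:jarnikh}. Irrationality is built in, since each $G_p$ is defined as a subset of $[0,1] \setminus \Q$. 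Choosing $(p, q) = (r_j, p_j)$ therefore delivers the required approximation at every level $j$.

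The only cosmetic point is the denominator estimate $q = p_j < 2m_j$ versus the strict requirement $q \le m_j = \Gamma_\g(n_j)$. This factor of two is harmless and can be absorbed by replacing $\Gamma_\g$ with a constant multiple of itself, since \prettyref{def:jarnikh} depends on the function $\f$ only up to constants; in the application to \prettyref{lem:fiber} only the asymptotic growth of $\Gamma_\g$ matters, so no quantitative content is lost. I do not anticipate a genuine obstacle here: the sets $H_{m_k}$ were engineered precisely so that every point they contain inherits an explicit rational approximation of denominator comparable to $m_k$ and error at most $1/\g(m_k)$, and \prettyref{lem:EksubAf} is essentially a direct transcription of this feature of the construction. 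The substantive work for \prettyref{lem:fiber} lives in \prettyref{pro:Bglow}; the role of this lemma is purely to package the Cantor set $E$ as a subset of an explicit Jarn\'ik-type set.
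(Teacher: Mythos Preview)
Your argument is correct and follows essentially the same route as the paper: at each level $k$ you pick the prime $p_k\in[m_k,2m_k)$ for which $x\in G'_{p_k}$, read off the rational $r_k/p_k$ with $|x-r_k/p_k|<1/\g(p_k)\le 1/\g(m_k)=1/n_k^2$, and note the harmless factor of $2$ in the denominator bound. If anything, your discussion of that factor-of-two discrepancy is more explicit than the paper's, which simply records $q\le 2\g^{-1}(n_k^2)$ and moves on.
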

\begin{proof}
 Recall that in the proof of Proposition \ref{pro:Bglow} we define the sets $G'_q$ as a union of intervals of the form $I=\left(\frac{r}{q}-\frac{1}{\g(q)};\frac{r}{q}+\frac{1}{\g(q)}\right)\setminus \Q$. The sets $H_n$ were defined as $   H_n:=\bigcup_{p\in\mathcal{P}_n^{2n}}G'_p$, where $\mathcal{P}_n^{2n}$ is the set of primes between $n$ and $2n$. We can therefore write
 \begin{equation*}
   H_n:=\bigcup I^n_j.
 \end{equation*}
Now, given a sequence $\m=\{m_k\}$, for each $k$, the set $E_k$
is defined as the union of all the intervals of $H_{m_k}$ that
belong to $E_{k-1}$, where $E_0=[0,1]$. If $E=\bigcap E_k$, any
$x\in E$ is in $E_k$ and therefore in some of the $I_j^{m_k}$. It
follows that there exists integers $r$ and $q$, $q\le 2m_k$ such
that
\begin{equation*}
 \left|x-\frac{r}{q}\right|<\frac{1}{\g(q)}<\frac{1}{\g(m_k)}=\frac{1}{n_k^2},\qquad q\le 2\g^{-1}(n_k^2).
\end{equation*}
Therefore $E\subset A_{2\Gamma_\g}^\n$.
\end{proof}

We remark that the above inclusion implies that any lower estimate on the size of $E$ would also be a lower estimate for
$A_{2\Gamma_\g}^\n$. 

We now conclude the proof of Theorem \ref{thm:sqrth3/2} by proving Lemma \ref{lem:fiber}: 

\begin{proof}[Proof of Lemma \ref{lem:fiber}:] 
Let $\h(x)=\frac{1}{\log(\frac{1}{x})}$.  For $r>1$, consider the function $\g_r$, the sequence $\m=\{m_k\}$ and the set $E_r$ as in Example \ref{ex:loglow}. Define $\f=2\Gamma_{\g_r}$, $\n$ and  $A^\n_{\f}$ as in Lemma \ref{lem:EksubAf}. It follows that $\f(x)=2\log(x^2)^\frac{r}{2}$, $n_j=e^{\frac{1}{2}n_{j-1}^{\frac{4}{r}j}}$ and 
\begin{equation*}
A^\n_\f:=\left\{x\in[0,1]\setminus \Q: \forall j\ \exists\ p,q\ ;
q\le \f(n_j); |x-\frac{p}{q}|<\frac{1}{n_j^2}\right\}.
\end{equation*}
Note that Lemma \ref{lem:EksubAf} says that the inclusion $E\subset A^\n_{2\Gamma_\g}$ \emph{always} holds, for any defining function $\g$, where $E$ is the \emph{substantial portion} of the set $B_\g$ (see Proposition \ref{pro:Bglow}). But we need the positivity of $\HH{\h}(E)$ to conclude that the set $A^\n_{2\Gamma_\g}$ also has positive $\HH{\h}$-measure. For the precise choices of $\h$ and $\g$, we obtain this last property from Example \ref{ex:loglow}. Precisely, $\HH{\h}(E)>0$ and therefore the set $A^\n_\f$ has positive $\HH{\h}$-measure. This concludes the proof of Lemma \ref{lem:fiber} and therefore the set constructed in the proof of Theorem \ref{thm:sqrth3/2} fulfills the condition of being an $F_\h$-set.
\end{proof}

As a final remark, we mention that it would be interesting to obtain sharp estimates in terms of generalized Hausdorff measures for any $F_\h$ class, not only for $F_\alpha$, in the spirit of Theorem \ref{thm:logFalpha}. The intuition here says that the reasonable dimension function for an upper bound for the class $F_\h$ is $\sqrt{\cdot}\h^{\frac{3}{2}} $. Therefore, a nice problem would be to construct an $F_\h$ set of zero $\HH{h}$-measure for a dimension function $h$ which is ``very close'' to $\h$. And, further, to estimate (as in Theorem \ref{thm:logFalpha}) how close to $\h$ such an $h$ can be in order to still allow us to perform the construction.

\section*{Acknowledgments}
We would like to thank to Pablo Shmerkin for many interesting
suggestions and remarks about this problem.

\bibliographystyle{amsalpha}

 \end{document}